\def\R{\mathbb{R}}
\def\P{\mathbb{P}}
\def\Z{\mathbb{Z}}
\def\N{\mathbb{N}}
\def\C{\mathbb{C}}
\def\k{\textit {\textbf k}}
\newcommand\dps{\displaystyle }
\newtheorem{thm}{Theorem}
\newtheorem{lem}{Lemma} 
\newtheorem{remark}{Remark} 
\def\qed{\relax
     \ifmmode
       ~\hfill\Box
     \else
        \unskip\nobreak ~\hfill$\Box$
      \fi \par}
\newtheorem{Proof}{Proof}
\newenvironment{proof}{\begin{Proof}\rm}{\qed\end{Proof}}
\begin{document}

\title{Numerical analysis of nonlinear eigenvalue problems}
\author{Eric Canc\`es\footnote{Universit\'e Paris-Est, CERMICS,
    Project-team Micmac, INRIA-Ecole des Ponts,  6 \& 8 avenue Blaise
    Pascal, 77455 Marne-la-Vall\'ee Cedex 2, France.}, Rachida
  Chakir\footnote{UPMC Univ Paris 06, UMR 7598 LJLL, Paris, F-75005 France ;
CNRS, UMR 7598 LJLL, Paris, F-75005 France} 
$\,$ and Yvon Maday$^\dag$\footnote{Division of Applied Mathematics, Brown
  University, Providence, RI, USA} } 
%
%
\maketitle

\begin{abstract}
We provide {\it a priori} error estimates for variational approximations
of the ground state energy, eigenvalue and eigenvector of nonlinear elliptic
eigenvalue problems of the form $-\mbox{div} (A\nabla u) + Vu + f(u^2) u =
\lambda u$, $\|u\|_{L^2}=1$. We focus in particular on the Fourier spectral
approximation (for periodic problems) and on the $\P_1$ and $\P_2$
finite-element discretizations. Denoting by $(u_\delta,\lambda_\delta)$
a variational approximation of the ground state eigenpair $(u,\lambda)$,
we are interested in the convergence rates of 
$\|u_\delta-u\|_{H^1}$, $\|u_\delta-u\|_{L^2}$,
$|\lambda_\delta-\lambda|$, and the ground state energy,  
when the discretization parameter $\delta$ goes to zero. We prove in
particular that
if $A$, $V$ and $f$ satisfy certain conditions,
$|\lambda_\delta-\lambda|$ goes to zero as
$\|u_\delta-u\|_{H^1}^2+\|u_\delta-u\|_{L^2}$. We also show that under
more restrictive assumptions on $A$, $V$ and $f$, 
$|\lambda_\delta-\lambda|$ converges to zero as
$\|u_\delta-u\|_{H^1}^2$, thus recovering a standard result for {\em
  linear} elliptic eigenvalue problems. For the latter analysis,
we make use of estimates of the error $u_\delta-u$ in negative Sobolev
norms. 
\end{abstract}

\section{Introduction}

Many mathematical models in science and engineering give rise to
nonlinear eigenvalue problems. Let us mention for instance the
calculation of the vibration modes of a mechanical structure
in the framework of nonlinear elasticity, the
Gross-Pitaevskii equation 
describing the steady states of Bose-Einstein
condensates~\cite{GrossPitaevskii}, or the 
Hartree-Fock and Kohn-Sham equations used to calculate ground state
electronic structures of molecular systems in quantum chemistry and
materials science~(see~\cite{handbook} for a mathematical
introduction). 

The numerical analysis of {\em linear} eigenvalue problems has been
thoroughly studied in the past decades (see e.g. \cite{linear}). On the other
hand, only a few results on {\em nonlinear} eigenvalue problems
have been published so far~\cite{Zhou1,Zhou2}.

In this article, we focus on a particular class of nonlinear eigenvalue
problems arising in the study of variational models of the form
\begin{equation} \label{eq:min_pb_u}
I = \inf \left\{ E(v), \; v \in X, \; \int_{\Omega} v^2 = 1 \right\}
\end{equation}
where 
\begin{equation*}
\left| \begin{array}{l} 
\Omega \mbox{ is a regular bounded domain or a rectangular brick of $\R^d$  and } X = H^1_0(\Omega) \\
\mbox{or} \\
\Omega \mbox{ is the unit cell of a periodic lattice ${\cal R}$ of $\R^d$ and }
X = H^1_\#(\Omega)
\end{array} \right.
\end{equation*}
with $d=1$, $2$ or $3$,
and where the energy functional $E$ is of the form 
$$
E(v) = \frac 12 a(v,v) + \frac 12 \int_\Omega F(v^2(x)) \, dx
$$
with
$$
a(u,v) = \int_\Omega (A \nabla u) \cdot \nabla v + 
\int_\Omega V uv.
$$
Recall that if $\Omega$ is the unit cell of a periodic lattice ${\cal
  R}$ of $\R^d$, then for all $s \in \R$ and $k \in \N$,
\begin{eqnarray*}
H^s_\#(\Omega) & = & \left\{ 
v|_\Omega, \; v \in H^s_{\rm loc}(\R^d) \; | \; v \; {\cal
  R}\mbox{-periodic} \right\}, \\
C^k_\#(\Omega) & = & \left\{ 
v|_\Omega, \; v \in C^k(\R^d) \; | \; v \; {\cal
  R}\mbox{-periodic} \right\} .
\end{eqnarray*}
We assume in addition that 
\begin{eqnarray} 
\!\!\!\!\!\!\!\!\!\!\!\!\!\! 
 &\bullet& A \in (L^\infty({\Omega}))^{d\times d} \mbox{ and }
 A(x)  \mbox{ is symmetric for almost all } x \in \Omega \label{eq:Hyp1} \\ 
\!\!\!\!\!\!\!\!\!\!\!\!\!\! && \exists \alpha > 0 \mbox{ s.t. } \xi^T A(x) \xi
\ge \alpha |\xi|^2 
\mbox{ for all } \xi \in \R^d \mbox{ and almost all } x \in \Omega
\label{eq:Hyp2}  \\ 
\!\!\!\!\!\!\!\!\!\!\!\!\!\!
&& \nonumber \\ 
\!\!\!\!\!\!\!\!\!\!\!\!\!\!
 &\bullet& V \in L^p(\Omega) \mbox{ for some } p > \max(1,d/2) 
\label{eq:Hyp3}  \\ 
\!\!\!\!\!\!\!\!\!\!\!\!\!\!
&& \nonumber \\
\!\!\!\!\!\!\!\!\!\!\!\!\!\!
&\bullet& F \in C^1([0,+\infty),\R) \cap C^2((0,\infty),\R) \mbox{ and }
F'' > 0 \mbox{ on } (0,+\infty)  \label{eq:Hyp6}  \\  
\!\!\!\!\!\!\!\!\!\!\!\!\!\! 
&& \exists 0 \le q < 2, \; \exists C \in \R_+  \mbox{ s.t. } \forall t
\ge 0, \;  
|F'(t)| \le C (1+t^q) \label{eq:Hyp7} \\  
\!\!\!\!\!\!\!\!\!\!\!\!\!\! 
&& F''(t)t \mbox{ remains bounded in the vicinity
  of } 0.  \label{eq:Hyp7p} 
\end{eqnarray}
To establish some of our results, we will also need to make the
additional assumption that there exists $1 < r \le 2$ and $0 \le s \le
5-r$ such that
\begin{eqnarray} 
\!\!\!\!\!\!\!\!\!\!\!\!\!\!
&& \forall R > 0, \; \exists C_R \in \R_+ \mbox{ s.t. }
\forall 0 < t_1 \le R, \; \forall t_2 \in \R, \; \nonumber \\
\!\!\!\!\!\!\!\!\!\!\!\!\!\!
& &
\quad \left| F'(t_2^2)t_2-F'(t_1^2)t_2-2F''(t_1^2)t_1^2(t_2-t_1)\right| 
\le C_R \left(1+|t_2|^{s} \right) |t_2-t_1|^r. \label{eq:Hyp8}
\end{eqnarray}

Note that for all $1 < m < 3$ and all $c > 0$, the function $F(t)=c t^m$
satisfies (\ref{eq:Hyp6})-(\ref{eq:Hyp7p}) and
(\ref{eq:Hyp8}), for some $1 < r \le 2$. It satisfies (\ref{eq:Hyp8})
with $r=2$ if $3/2 \le m < 3$. 
This allows us to handle the Thomas-Fermi kinetic energy
functional ($m=\frac 53$) as well as the repulsive interaction in
Bose-Einstein condensates ($m=2$). 

\begin{remark}
Assumption (\ref{eq:Hyp7}) is sharp for $d=3$, but is useless for
$d=1$ and can be replaced with the weaker assumption that there exist
$q < \infty$ and $C \in \R_+$ such that $|F'(t)| \le C(1+t^q)$ for all
$t \in \R_+$, for $d=2$. Likewise, the condition $0 \le s \le 5-r$ in assumption
(\ref{eq:Hyp8}) is sharp for $d=3$ but can be replaced with $0 \le s <
\infty$ if $d=1$ or $d=2$.
\end{remark}

In order to simplify the notation, we denote by  $f(t)=F'(t)$.

Making the change of variable $\rho=v^2$ and noticing that $a(|v|,|v|) =
a(v,v)$ for all $v \in X$, it is easy to check that
\begin{equation} \label{eq:min_pb_rho}
I = \inf \left\{ {\cal E}(\rho), \; \rho \ge 0, \; \sqrt\rho \in X, \;
  \int_{\Omega} \rho = 1 \right\},
\end{equation}
where
$$
{\cal E}(\rho) = \frac 12 a(\sqrt\rho,\sqrt\rho) + \frac 12 \int_\Omega
F(\rho). 
$$

We will see that under assumptions (\ref{eq:Hyp1})-(\ref{eq:Hyp7}),
(\ref{eq:min_pb_rho}) has a unique 
solution $\rho_0$ and (\ref{eq:min_pb_u}) has exactly two solutions:
$u = \sqrt{\rho_0}$ and $-u$. Moreover, $E$ is $C^1$ on
$X$ and for all $v \in X$, $E'(v) = A_vv$ where
$$
A_v = -\mbox{div}(A \nabla \cdot) + V + f(v^2).
$$
Note that $A_v$ defines a self-adjoint operator on $L^2(\Omega)$, with
form domain $X$. The function $u$
therefore is solution to the Euler equation 
\begin{equation} \label{eq:Euler}
\forall v \in X, \quad \langle A_uu-\lambda
u,v \rangle_{X',X} = 0 
\end{equation}
for some $\lambda \in \R$ (the Lagrange multiplier of the constraint
$\|u\|_{L^2}^2 = 1$) and equation~(\ref{eq:Euler}), complemented with the
constraint $\|u\|_{L^2} = 1$, takes the form of the nonlinear eigenvalue
problem
\begin{equation}\label{eq:10}
\left\{ \begin{array}{l}
A_u u = \lambda u \\
\| u\|_{L^2} = 1. \end{array} \right. 
\end{equation}
In addition, $u \in C^0(\overline{\Omega})$, $u > 0$ in $\Omega$ and
$\lambda$ is the ground state eigenvalue 
of the linear operator $A_u$. An important result is that $\lambda$ is a
{\em simple} eigenvalue of $A_u$. It is interesting to note that
$\lambda$ is also the ground state eigenvalue of the {\em nonlinear}
eigenvalue problem  
\begin{equation} \label{eq:nonlinear_eigenvalue_pb_0}
\left\{ \begin{array}{l}
\mbox{search } (\mu,v) \in \R \times X \mbox{ such that} \\
A_v v = \mu v \\
\| v \|_{L^2} = 1, \end{array} \right. 
\end{equation}
in the following sense: if $(\mu,v)$ is solution to
(\ref{eq:nonlinear_eigenvalue_pb_0}) then either $\mu > \lambda$ or
$\mu=\lambda$ and $v= \pm u$. 
All these properties, except maybe the last one, are classical. For the
sake of completeness, their proofs are however given in the Appendix. 

Let us now turn to the main topic of this article, namely the derivation
of a priori error estimates for variational approximations of the ground
state eigenpair $(\lambda,u)$. We denote by $(X_\delta)_{\delta > 0}$ a
family of finite-dimensional subspaces of $X$ such that
\begin{equation} \label{eq:densite}
\min \left\{ \|u-v_\delta\|_{H^1}, \; v_\delta \in X_\delta \right\} \; 
\mathop{\longrightarrow}_{\delta \to 0^+} \; 0
\end{equation}
and consider the variational
approximation of (\ref{eq:min_pb_u}) consisting in solving
\begin{equation} \label{eq:min_pb_u_delta}
I_\delta = \inf \left\{ E(v_\delta), \; v_\delta \in X_\delta, \;
  \int_{\Omega} v_\delta^2 = 1 \right\}. 
\end{equation}
Problem (\ref{eq:min_pb_u_delta}) has at least one minimizer $u_\delta$,
which satisfies
\begin{equation}\label{eq:14}
\forall v_\delta \in X_\delta, \quad \langle A_{u_\delta}u_\delta-\lambda_\delta
u_\delta,v_\delta \rangle_{X',X} = 0 
\end{equation}
for some $\lambda_\delta \in \R$. Obviously, $-u_\delta$ also is a
minimizer associated with the same eigenvalue $\lambda_\delta$. On the
other hand, it is not known whether $u_\delta$ and $-u_\delta$
are the only 
minimizers of (\ref{eq:min_pb_u_delta}). One of the reasons why the
argument used in the infinite-dimensional setting cannot be transposed
to the discrete case is that the set
$$
\left\{\rho \; | \; \exists u_\delta  \in X_\delta  \mbox{ s.t. }
  \|u_\delta \|_{L^2} = 1, \; \rho = u_\delta ^2 \right\}
$$
is not convex in general. We will see however (cf. Theorem~\ref{Th:basic}) 
that for any family $(u_\delta)_{\delta > 0}$ of global
minimizers of (\ref{eq:min_pb_u_delta}) such that 
$(u,u_\delta) \ge 0$ for all $\delta > 0$, the following holds true
$$
\|u_\delta-u\|_{H^1} \mathop{\longrightarrow}_{\delta \to 0^+} \; 0.
$$
In addition, a simple calculation leads to
\begin{equation} \label{eq:estimate_eigenvalue}
\lambda_\delta-\lambda = 
\langle (A_u-\lambda)(u_\delta-u),(u_\delta-u) \rangle_{X',X} 
+ \int_\Omega w_{u,u_\delta} (u_\delta-u)
\end{equation}
where
$$
w_{u,u_\delta} =  u_\delta^2
\frac{f(u_\delta^2)-f(u^2)}{u_\delta-u}.
$$
The first term of the right-hand side of (\ref{eq:estimate_eigenvalue})
is nonnegative and goes to zero as $\|u_\delta-u\|_{H^1}^2$. 
We will prove in Theorem~\ref{Th:basic} that the second term
goes to zero at least as $\|u_\delta-u\|_{L^{6/(5-2q)}}$. Therefore,
$|\lambda_\delta-\lambda|$ converges to zero with $\delta$ at least as
$\|u_\delta-u\|_{H^1}^2+\|u_\delta-u\|_{L^{6/(5-2q)}}$.

\medskip

The purpose of this article is to provide more precise {\it a priori} error
bounds on $|\lambda_\delta-\lambda|$, as well as on
$\|u_\delta-u\|_{H^1}$, $\|u_\delta-u\|_{L^2}$ and
$E(u_\delta)-E(u)$. In Section~\ref{sec:basic}, we prove a 
series of estimates valid in the general framework described
above. 
We then turn to more specific examples, where the analysis can be pushed
further. 
In Section~\ref{sec:Fourier}, we concentrate on the discretization of
problem (\ref{eq:min_pb_u}) with
\begin{eqnarray}
&& \Omega=(0,2\pi)^d, \nonumber \\
&& X=H^1_\#(0,2\pi)^d, 
\nonumber \\
&& E(v) = \frac 12 \int_\Omega |\nabla v|^2 + \frac 12 \int_\Omega V v^2
+ \frac {1}{2} \int_\Omega F(v^2), 
\nonumber
\end{eqnarray}
in Fourier modes.
In Section~\ref{sec:FE}, we deal with the ${\mathbb P}_1$ and ${\mathbb
  P}_2$ finite element discretizations of problem (\ref{eq:min_pb_u}) with
\begin{eqnarray}
&& \Omega \mbox{ rectangular brick of $\R^d$}, \nonumber \\
&& X=H^1_0(\Omega),
\nonumber \\
&&  E(v) = \frac 12 \int_\Omega |\nabla v|^2 + \frac 12 \int_\Omega V v^2
+ \frac {1}{2} \int_\Omega F(v^2).
\nonumber
\end{eqnarray}

Lastly, we discuss the issue of numerical integration in
Section~\ref{sec:integration}.

\section{Basic error analysis}
\label{sec:basic}

The aim of this section is to establish error bounds on
$\|u_\delta-u\|_{H^1}$, $\|u_\delta-u\|_{L^2}$,
$|\lambda_\delta-\lambda|$ and $E(u_\delta)-E(u)$, in a general
framework. In the whole section, 
we make the assumptions (\ref{eq:Hyp1})-(\ref{eq:Hyp7p}) and
(\ref{eq:densite}), and we denote by $u$ the unique positive solution of
(\ref{eq:min_pb_u}) and by $u_\delta$ a minimizer of the discretized problem
(\ref{eq:min_pb_u_delta}) such that $(u_\delta,u)_{L^2} \ge 0$. We also
introduce the bilinear form $E''(u)$ defined on $X \times X$ by 
$$
\langle E''(u)v,w \rangle_{X',X} = \langle A_u v,w \rangle_{X',X} +
2 \, \int_\Omega f'(u^2)u^2 vw.
$$
When $F \in C^2([0,+\infty),\R)$, then $E$ is twice differentiable on
$X$ and $E''(u)$ is the second derivative of $E$ at $u$.

\medskip

\begin{lem} \label{lem:technical} 
There exists $\beta > 0$ and $M \in \R_+$ such that for all $v \in X$, 
\begin{eqnarray}
0 & \le &  \langle (A_u-\lambda) v,v \rangle_{X',X}  \le  M
\|v\|_{H^1}^2 \label{eq:Au-lambda_2} \\  
\beta \|v\|_{H^1}^2 & \le & \langle (E''(u)-\lambda) v,v \rangle_{X',X}  \le  M \|v\|_{H^1}^2.\label{eq:Euu-lambda}
\end{eqnarray}
There exists $\gamma > 0$ such that for all $\delta > 0$,
\begin{equation}
\gamma \|u_\delta-u\|_{H^1}^2 \le \langle (A_u-\lambda)
(u_\delta-u),(u_\delta-u) \rangle_{X',X} .  
 \label{eq:Au-lambda_1} 
\end{equation}
\end{lem}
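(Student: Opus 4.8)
The plan is to prove the three displayed inequalities in turn, using the structural facts recalled before the lemma: $\lambda$ is the \emph{simple} ground state eigenvalue of the self-adjoint operator $A_u$ (so $A_u\ge\lambda$ as a form on $X$), $u\in C^0(\overline\Omega)\cap X$ with $u>0$ in $\Omega$, and $f'=F''>0$ on $(0,+\infty)$. I first record two integrability remarks. Since $0\le u\le\|u\|_{L^\infty}$ and $F'\in C^0([0,+\infty))$, one has $f(u^2)\in L^\infty(\Omega)$; and $f'(u^2)u^2=F''(u^2)u^2\in L^\infty(\Omega)$ as well, because $F''(t)t$ is bounded near $0$ by assumption~(\ref{eq:Hyp7p}) while $F''$ is bounded on $[\varepsilon,\|u\|_{L^\infty}^2]$ for any $\varepsilon>0$ by continuity. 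Together with $V\in L^p(\Omega)$, $p>\max(1,d/2)$, and the continuous Sobolev embedding $H^1(\Omega)\hookrightarrow L^{2p'}(\Omega)$, these remarks bound each of $|a(v,v)|$, $\int_\Omega|V|v^2$, $\int_\Omega|f(u^2)|v^2$ and $\int_\Omega f'(u^2)u^2v^2$ by a constant times $\|v\|_{H^1}^2$, which gives the upper bounds in~(\ref{eq:Au-lambda_2}) and~(\ref{eq:Euu-lambda}). The left inequality in~(\ref{eq:Au-lambda_2}) is the variational characterisation of $\lambda=\inf\sigma(A_u)$.

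For the coercivity~(\ref{eq:Euu-lambda}) I would argue by contradiction. Note first that $\langle(E''(u)-\lambda)v,v\rangle=\langle(A_u-\lambda)v,v\rangle+2\int_\Omega f'(u^2)u^2v^2\ge0$. If the lower bound fails there are $v_n\in X$ with $\|v_n\|_{H^1}=1$ and $\langle(E''(u)-\lambda)v_n,v_n\rangle\to0$; extract $v_n\rightharpoonup v$ weakly in $H^1$ and, by compactness of $H^1(\Omega)\hookrightarrow L^2(\Omega)$ and of $H^1(\Omega)\hookrightarrow L^{2p'}(\Omega)$, strongly in those two spaces. Writing $\langle(E''(u)-\lambda)w,w\rangle=\int_\Omega A\nabla w\cdot\nabla w+\int_\Omega\big(V+f(u^2)-\lambda+2f'(u^2)u^2\big)w^2$, the zeroth-order part converges along $(v_n)$ (its coefficient lies in $L^p(\Omega)$ and $v_n^2\to v^2$ in $L^{p'}(\Omega)$) while the principal part is weakly lower semicontinuous; hence $\langle(E''(u)-\lambda)v,v\rangle\le\liminf_n\langle(E''(u)-\lambda)v_n,v_n\rangle=0$, so this quantity is zero, which forces $\int_\Omega f'(u^2)u^2v^2=0$, and since $f'(u^2)u^2>0$ a.e.\ in $\Omega$ this gives $v=0$. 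Then $\|v_n\|_{L^2}\to0$ and the zeroth-order part of $\langle(E''(u)-\lambda)v_n,v_n\rangle$ tends to $0$, whereas $\|v_n\|_{H^1}=1$ forces $\liminf_n\|\nabla v_n\|_{L^2}>0$, so $\langle(E''(u)-\lambda)v_n,v_n\rangle\ge\alpha\|\nabla v_n\|_{L^2}^2+o(1)$ stays bounded away from $0$ — a contradiction. The same compactness scheme applied to $\langle(A_u-\lambda)\cdot,\cdot\rangle$ restricted to the $L^2$-hyperplane $u^\perp$, now using the \emph{simplicity} of $\lambda$ to conclude that a limit $v\in u^\perp$ with $\langle(A_u-\lambda)v,v\rangle=0$ must vanish, yields a constant $c>0$ with
$$
\langle(A_u-\lambda)w,w\rangle\ge c\,\|w\|_{H^1}^2\qquad\text{for all }w\in X\text{ with }(w,u)_{L^2}=0 ,
$$
which I will use for~(\ref{eq:Au-lambda_1}).

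For~(\ref{eq:Au-lambda_1}) I would exploit that $\|u\|_{L^2}=\|u_\delta\|_{L^2}=1$ and $(u_\delta,u)_{L^2}\ge0$. Decompose $u_\delta-u=t_\delta u+w_\delta$ with $(w_\delta,u)_{L^2}=0$ and $t_\delta=(u_\delta,u)_{L^2}-1\in[-1,0]$; then $1=\|u_\delta\|_{L^2}^2=(1+t_\delta)^2+\|w_\delta\|_{L^2}^2$, so $\|w_\delta\|_{L^2}^2=1-(u_\delta,u)_{L^2}^2\le1$ and $|t_\delta|=1-\sqrt{1-\|w_\delta\|_{L^2}^2}\le\|w_\delta\|_{L^2}^2$ (convexity of $x\mapsto1-\sqrt{1-x}$ on $[0,1]$). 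Since $(A_u-\lambda)u=0$ in $X'$ by the Euler equation~(\ref{eq:10}), one gets $\langle(A_u-\lambda)(u_\delta-u),u_\delta-u\rangle=\langle(A_u-\lambda)w_\delta,w_\delta\rangle\ge c\,\|w_\delta\|_{H^1}^2$, while, using $t_\delta^2\le\|w_\delta\|_{L^2}^4\le\|w_\delta\|_{L^2}^2$,
$$
\|u_\delta-u\|_{H^1}^2\le2\|w_\delta\|_{H^1}^2+2t_\delta^2\|u\|_{H^1}^2\le2\|w_\delta\|_{H^1}^2+2\|u\|_{H^1}^2\|w_\delta\|_{L^2}^2\le\big(2+2\|u\|_{H^1}^2\big)\|w_\delta\|_{H^1}^2 .
$$
Combining the two inequalities yields~(\ref{eq:Au-lambda_1}) with $\gamma=c/(2+2\|u\|_{H^1}^2)$, uniformly in $\delta$.

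I expect the coercivity~(\ref{eq:Euu-lambda}) to be the only real difficulty. One must rule out a loss of coercivity in two ways at once: \emph{along} $u$, where the bare form $\langle(A_u-\lambda)\cdot,\cdot\rangle$ is only nonnegative and is in fact degenerate on $\mathrm{span}(u)$ — this is precisely where the strict positivity of $u$ and of $f'$, hence of $\int_\Omega f'(u^2)u^2v^2$ for $v\ne0$, enters — and \emph{at infinity} in $H^1$, which the compact Sobolev embeddings handle provided the zeroth-order coefficients are integrable against $v_n^2$; assumption~(\ref{eq:Hyp7p}) is exactly what makes $f'(u^2)u^2$ bounded. Everything else is routine.
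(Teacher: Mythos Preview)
Your proof is correct. The paper proceeds somewhat differently for the two coercivity statements. For the lower bound in~(\ref{eq:Euu-lambda}), instead of your direct $H^1$-compactness argument, the paper works in two steps: it first exploits the spectral gap $\langle(A_u-\lambda)v,v\rangle\ge\eta\big(\|v\|_{L^2}^2-|(u,v)_{L^2}|^2\big)$ (from simplicity of $\lambda$) together with the strict positivity of $\int_\Omega f'(u^2)u^2v^2$ to get an $L^2$-lower bound $\langle(E''(u)-\lambda)v,v\rangle\ge\widetilde\eta\,\|v\|_{L^2}^2$, and then upgrades to $H^1$ via an explicit G{\aa}rding-type inequality $\langle(A_u-\lambda)v,v\rangle\ge\tfrac{\alpha}{2}\|\nabla v\|_{L^2}^2-C\|v\|_{L^2}^2$. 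For~(\ref{eq:Au-lambda_1}), the paper applies the spectral gap inequality directly to $u_\delta-u$ without decomposing, using the identity $\|u_\delta-u\|_{L^2}^2-|(u,u_\delta-u)_{L^2}|^2=1-(u,u_\delta)_{L^2}^2\ge\tfrac12\|u_\delta-u\|_{L^2}^2$, and again combines with the G{\aa}rding inequality. Your orthogonal decomposition $u_\delta-u=t_\delta u+w_\delta$ with the convexity bound $|t_\delta|\le\|w_\delta\|_{L^2}^2$ is an equally clean alternative. The paper's route is slightly more quantitative --- the constants come out explicitly in terms of the spectral gap $\eta$ and the ellipticity constant $\alpha$ --- whereas your compactness scheme is softer but more uniform in spirit; both are standard and neither is strictly shorter.
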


\begin{proof} We have for all $v \in X$,
$$
\langle (A_u-\lambda) v,v \rangle_{X',X} \le \|A\|_{L^\infty}
\|\nabla v\|_{L^2}^2 + \|V\|_{L^p} \|v\|_{L^{2p'}}^2 + \|f(u^2)\|_{L^\infty}
\|v\|_{L^2}^2
$$
where $p'=(1-p^{-1})^{-1}$ and
$$
\langle (E''(u)-\lambda) v,v \rangle_{X',X} \le 
\langle (A_u-\lambda) v,v \rangle_{X',X} + 2 \|f'(u^2)u^2\|_{L^\infty}
\|v\|_{L^2}^2. 
$$
Hence the upper bounds in (\ref{eq:Au-lambda_2}) and (\ref{eq:Euu-lambda}). 
We now use the fact that $\lambda$, the lowest
eigenvalue of $A_u$, is simple (see Lemma~\ref{lem:theory} in the
Appendix). This 
implies that there exists $\eta > 0$ such that
\begin{equation} \label{eq:gap}
\forall v \in X, \quad 
\langle (A_u-\lambda) v,v \rangle_{X',X} \ge 
\eta (\|v\|_{L^2}^2-|(u,v)_{L^2}|^2) \ge 0.
\end{equation}
This provides on the one hand the lower bound
(\ref{eq:Au-lambda_2}), and leads on the other hand to the inequality
$$
\forall v \in X, \quad 
\langle (E''(u)-\lambda) v,v \rangle_{X',X} \ge 
2\int_{\Omega} f'(u^2)u^2 v^2.
$$
As $f' =F'' > 0$ in $(0,+\infty)$ and $u > 0$ in $\Omega$, we therefore have
$$
\forall v \in X \setminus \left\{0\right\}, \quad 
 \langle (E''(u)-\lambda) v,v \rangle_{X',X} > 0.
$$
Reasoning by contradiction, we deduce from the above inequality and the
first inequality in (\ref{eq:gap}) that there exists $\widetilde \eta >
0$ such that
\begin{equation} \label{eq:borne_inf_1}
\forall v \in X, \quad 
\langle (E''(u)-\lambda) v,v \rangle_{X',X} \ge \widetilde \eta \|v\|_{L^2}^2.
\end{equation}
Besides, there exists a constant $C \in \R_+$ such that
\begin{equation} \label{eq:borne_inf_2}
 \forall v \in X, \quad 
\langle (A_u-\lambda) v,v \rangle_{X',X} \ge \frac{\alpha}2 \|\nabla
v\|_{L^2}^2 - C \|v\|_{L^2}^2.
\end{equation}
Let us establish this inequality for $d=3$ (the case when $d=1$ is
straightforward and the case when $d=2$ can be dealt with in the same
way). For all $x \in X$, 
\begin{eqnarray*}
\langle (A_u-\lambda) v,v \rangle_{X',X} 
& = &
\int_\Omega (A\nabla v) \cdot \nabla v + \int_\Omega (V+f(v^2)-\lambda)
v^2  \nonumber \\ 
& \ge &  
\alpha \|\nabla v\|_{L^2}^2 - \|V\|_{L^p} \|v\|_{L^{2p'}}^2 + (f(0)-\lambda)
\|v\|_{L^2}^2 \nonumber \\
& \ge &  
\alpha \|\nabla v\|_{L^2}^2 - \|V\|_{L^p} \|v\|_{L^2}^{2-3/p}
\|v\|_{L^6}^{3/p} + (f(0)-\lambda) \|v\|_{L^2}^2 \nonumber \\
& \ge &  
\alpha \|\nabla v\|_{L^2}^2 - C_6^{3/p} \|V\|_{L^p} \|v\|_{L^2}^{2-3/p}
\|v\|_{H^1}^{3/p} + (f(0)-\lambda) \|v\|_{L^2}^2 \nonumber \\
& \ge & 
\frac \alpha 2 \|\nabla v\|_{L^2}^2 +
\left(f(0) - \lambda - \frac{3-2p}{2p}
\left( \frac{3C_6^2\|V\|_{L^p}^{2p/3}}{p\alpha} \right)^{3/(2p-3)} -
\frac \alpha 2 \right)
\|v\|_{L^2}^2, 
\end{eqnarray*}
where $C_6$ is the Sobolev constant such that $\forall v \in X$,
$\|v\|_{L^6} \le C_6 \|v\|_{H^1}$. The coercivity of
$E''(u)-\lambda$ (i.e. the lower bound in (\ref{eq:Euu-lambda})) is a
straightforward consequence of (\ref{eq:borne_inf_1}) and 
(\ref{eq:borne_inf_2}).

To prove (\ref{eq:Au-lambda_1}), we notice that
$$
\|u_\delta\|_{L^2}^2-|(u,u_\delta)_{L^2}|^2 \ge 1-(u,u_\delta)_{L^2}=\frac
12 \|u_\delta-u\|_{L^2}^2. 
$$
It therefore readily follows from (\ref{eq:gap}) that 
$$
\langle (A_u-\lambda) (u_\delta-u),(u_\delta-u) \rangle_{X',X} \ge
\frac{\eta}2 \|u_\delta-u\|_{L^2}^2.
$$
Combining with (\ref{eq:borne_inf_2}), we finally obtain
(\ref{eq:Au-lambda_1}). 
\end{proof}

\medskip

For $w \in X'$, we denote by $\psi_w$ the unique solution to the adjoint
problem 
\begin{equation} \label{eq:adjoint}
\left\{ \begin{array}{l}
\mbox{find } \psi_w \in u^\perp \mbox{ such that} \\
\forall v \in u^\perp, \quad \langle (E''(u)-\lambda) \psi_w,v
\rangle_{X',X} = \langle w,v \rangle_{X',X}, \end{array} \right.
\end{equation}
where
$$
u^\perp = \left\{ v \in X \; | \; \int_\Omega uv = 0 \right\}.
$$
The existence and uniqueness of the solution to (\ref{eq:adjoint}) is a
straightforward consequence of (\ref{eq:Euu-lambda}) and the Lax-Milgram
lemma. Besides,
\begin{equation} \label{eq:bound_psi}
\forall w \in L^2(\Omega), \quad \|\psi_w\|_{H^1} \le \beta^{-1}M \|w\|_{X'}
\le \beta^{-1}M \|w\|_{L^2}.
\end{equation}

\medskip

We can now state the main result of this section.

\medskip

\begin{thm} \label{Th:basic}
Under assumptions (\ref{eq:Hyp1})-(\ref{eq:Hyp7}) and
(\ref{eq:densite}), it holds
$$
\|u_\delta-u\|_{H^1} \mathop{\longrightarrow}_{\delta \to 0^+} \; 0.
$$
If in addition, (\ref{eq:Hyp7p}) is satisfied, then there exists $C \in
\R_+$ such that for all $\delta > 0$, 
\begin{equation} \label{eq:error_energy}
\frac \gamma 2 \|u_\delta-u\|_{H^1}^2 \le E(u_\delta)-E(u) \le 
\frac M 2 \|u_\delta-u\|_{H^1}^2 + C \|u_\delta-u\|_{L^{6/(5-2q)}},
\end{equation}
and
\begin{equation} 
|\lambda_\delta-\lambda|  \le  C \left( \|u_\delta-u\|_{H^1}^2 
+ \|u_\delta-u\|_{L^{6/(5-2q)}} \right). \label{eq:error_lambda}
\end{equation}
Besides, if assumption (\ref{eq:Hyp8}) is satisfied for some $1 < r \le
2$ and $0 \le s \le 5-r$, then 
there exists $\delta_0 > 0$ and $C \in \R_+$ such that for all $0 <
\delta < \delta_0$, 
\begin{eqnarray}
\|u_\delta-u\|_{H^1} & \le & C \min_{v_\delta \in X_\delta}
\|v_\delta-u\|_{H^1} \label{eq:error_H1} \\
\|u_\delta-u\|_{L^2}^2 & \le &   C \, \bigg( 
\|u_\delta-u\|_{L^2} \|u_\delta-u\|_{L^{6r/(5-s)}}^r \nonumber \\ & & 
\qquad 
+ \|u_\delta-u\|_{H^1} \min_{\psi_\delta \in X_\delta}
\|\psi_{u_\delta-u}-\psi_\delta\|_{H^1} \bigg). 
\label{eq:error_L2} 
\end{eqnarray}
Lastly, if $F''$ is bounded in the vicinity of $0$, there exists $C \in
\R_+$ such that for all $\delta > 0$,
\begin{equation} \label{eq:error_energy_2}
\frac \gamma 2 \|u_\delta-u\|_{H^1}^2 \le E(u_\delta)-E(u) \le 
C \|u_\delta-u\|_{H^1}^2.
\end{equation}
\end{thm}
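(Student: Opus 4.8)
The plan is to establish the five assertions in the order listed, reducing each to the coercivity estimates of Lemma~\ref{lem:technical} together with a careful control of the nonlinear remainders through~(\ref{eq:Hyp7})--(\ref{eq:Hyp8}), the $L^\infty$-bound on $u$, the uniform $H^1$-bound on the minimizers, and H\"older's inequality combined with $H^1\hookrightarrow L^6$. For the $H^1$-convergence I would first prove $I_\delta\to I$: $I_\delta\ge I$ is trivial since $X_\delta\subset X$, and for the reverse inequality one takes $w_\delta\in X_\delta$ with $\|w_\delta-u\|_{H^1}\to0$ from~(\ref{eq:densite}), normalises it to $v_\delta=w_\delta/\|w_\delta\|_{L^2}\in X_\delta$, and uses the $H^1$-continuity of $E$ (a consequence of~(\ref{eq:Hyp7})) to get $I_\delta\le E(v_\delta)\to E(u)=I$. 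Since $E(u_\delta)=I_\delta$ is bounded, $(u_\delta)$ is bounded in $X$ by the coercivity up to a compact perturbation contained in~(\ref{eq:borne_inf_2}); a subsequence converges weakly in $X$ and strongly in $L^2$ and in the relevant $L^q$'s ($\Omega$ bounded), and weak lower semicontinuity of $a$ plus strong convergence in the nonlinear term give a limit $u_\star$ with $\|u_\star\|_{L^2}=1$ and $E(u_\star)\le I$; hence $u_\star=\pm u$, and $(u_\delta,u)_{L^2}\ge0$ forces $u_\star=u$. Strong convergence of the whole sequence then follows from $a(u_\delta-u,u_\delta-u)=a(u_\delta,u_\delta)-2a(u_\delta,u)+a(u,u)$, where $a(u_\delta,u_\delta)=2E(u_\delta)-\int_\Omega F(u_\delta^2)\to 2I-\int_\Omega F(u^2)=a(u,u)$, so $a(u_\delta-u,u_\delta-u)\to0$, which with $\|u_\delta-u\|_{L^2}\to0$ and~(\ref{eq:borne_inf_2}) yields $\|u_\delta-u\|_{H^1}\to0$.

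Next, writing $e=u_\delta-u$ and using $\|u_\delta\|_{L^2}=\|u\|_{L^2}=1$, $(e,u)_{L^2}=-\frac12\|e\|_{L^2}^2$ and the Euler equation~(\ref{eq:Euler}), one obtains the exact identity
$$2(E(u_\delta)-E(u))=\langle(A_u-\lambda)e,e\rangle_{X',X}+\int_\Omega(F(u_\delta^2)-F(u^2)-f(u^2)(u_\delta^2-u^2)),$$
while the eigenvalue difference is given by~(\ref{eq:estimate_eigenvalue}). By convexity of $F$ the last integral is nonnegative, so together with~(\ref{eq:Au-lambda_1}) it gives the lower bounds in~(\ref{eq:error_energy}); for the upper bounds, the quadratic terms are $\le M\|e\|_{H^1}^2$ by~(\ref{eq:Au-lambda_2}), and the remaining integrals --- the one above and $\int_\Omega w_{u,u_\delta}e=\int_\Omega u_\delta^2(f(u_\delta^2)-f(u^2))$ from~(\ref{eq:estimate_eigenvalue}) --- are estimated through the mean value theorem, the growth bound~(\ref{eq:Hyp7}) on $f$, and~(\ref{eq:Hyp7p}) (to handle $F''$ near $0$, where $u$ may vanish), giving a bound of order $\|e\|_{L^{6/(5-2q)}}$ up to lower-order terms, hence~(\ref{eq:error_energy}) and~(\ref{eq:error_lambda}); assumption~(\ref{eq:Hyp7p}) is also what makes $E''(u)$ and the constants $\beta,\gamma,M$ available. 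For the improved bound~(\ref{eq:error_energy_2}) I would revisit that integral when $F''$ is bounded near $0$: then $f=F'$ is locally Lipschitz on $[0,\infty)$, so where $u_\delta$ stays bounded the integrand is $O(|u_\delta^2-u^2|^2)=O(|e|^2(|e|^2+u^2))$, which integrates to $O(\|e\|_{H^1}^2)$, and on the complementary set the elementary bound $|e|\ge\frac12|u_\delta|$ converts~(\ref{eq:Hyp7}) and the small measure of that set into powers of $\|e\|_{H^1}$ of degree at least $2$.

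For the quasi-optimal estimates under~(\ref{eq:Hyp8}): subtracting~(\ref{eq:14}) from~(\ref{eq:Euler}) and Taylor-expanding $v\mapsto A_vv$ at $u$ gives, for all $v_\delta\in X_\delta$,
$$\langle(E''(u)-\lambda)e,v_\delta\rangle_{X',X}=-\langle S,v_\delta\rangle_{X',X}+(\lambda_\delta-\lambda)(u_\delta,v_\delta)_{L^2},\qquad S=(f(u_\delta^2)-f(u^2))u_\delta-2f'(u^2)u^2e,$$
and~(\ref{eq:Hyp8}) applied with $t_1=u$, $t_2=u_\delta$ gives $|S|\le C(1+|u_\delta|^s)|e|^r$, hence $\|S\|_{X'}\le C\|e\|_{L^{6r/(5-s)}}^r\le C\|e\|_{H^1}^r$, the condition $s\le5-r$ being exactly what keeps $6r/(5-s)\le6$. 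Starting from $\beta\|e\|_{H^1}^2\le\langle(E''(u)-\lambda)e,e\rangle_{X',X}$ and writing $e=(e-v_\delta)+v_\delta$ with $v_\delta=u_\delta-w_\delta$, $w_\delta$ a best $H^1$-approximant of $u$ in $X_\delta$, one has $\|e-v_\delta\|_{H^1}=\|u-w_\delta\|_{H^1}=d_\delta:=\min_{v_\delta\in X_\delta}\|v_\delta-u\|_{H^1}$ and $|(u_\delta,v_\delta)_{L^2}|=|\frac12\|e\|_{L^2}^2-(u_\delta,w_\delta-u)_{L^2}|\le\frac12\|e\|_{L^2}^2+d_\delta$; inserting $v_\delta$, dividing by $\|e\|_{H^1}$, and using the crude consequence $|\lambda_\delta-\lambda|\le C\|e\|_{H^1}$ of~(\ref{eq:error_lambda}) and $\|e\|_{L^2}\le\|e\|_{H^1}$, every term on the right is either of order $d_\delta$ or carries a factor tending to $0$ (because $r>1$ and $\|e\|_{H^1}\to0$); absorbing the latter for $\delta<\delta_0$ leaves $\|e\|_{H^1}\le Cd_\delta$, which is~(\ref{eq:error_H1}). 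The $L^2$-estimate~(\ref{eq:error_L2}) follows by an Aubin--Nitsche argument with the adjoint solution $\psi_e$ of~(\ref{eq:adjoint}): writing $e=e_\perp+(e,u)_{L^2}u$ with $e_\perp\in u^\perp$ gives $\|e\|_{L^2}^2=\langle(E''(u)-\lambda)e,\psi_e\rangle_{X',X}+O(\|e\|_{L^2}^3)$, and inserting a best $H^1$-approximant $\psi_\delta$ of $\psi_e$ into the identity above, using $(u,\psi_e)_{L^2}=0$, $\|\psi_e\|_{H^1}\le\beta^{-1}M\|e\|_{L^2}$ from~(\ref{eq:bound_psi}), the bound on $\|S\|_{X'}$ and~(\ref{eq:error_lambda}), yields exactly the right-hand side of~(\ref{eq:error_L2}).

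The real difficulty, I expect, is the quasi-optimal $H^1$-bound~(\ref{eq:error_H1}): there the nonlinear residual $S$ and the eigenvalue shift $(\lambda_\delta-\lambda)(u_\delta,\cdot)_{L^2}$ have to be controlled at the same time, and the argument closes only because every contribution other than the approximation error $d_\delta$ turns out to be of lower order --- which relies on $r>1$, on the normalisation $\|u_\delta\|_{L^2}=1$ (so that $(e,u)_{L^2}=O(\|e\|_{L^2}^2)$ and the test function can be chosen to nearly annihilate $(u_\delta,\cdot)_{L^2}$), and on the already-established smallness of $\|u_\delta-u\|_{H^1}$ and $|\lambda_\delta-\lambda|$. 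Everything else is exponent bookkeeping, the point being to match $q<2$ and $s\le5-r$ with the Sobolev embedding $H^1\hookrightarrow L^6$ so that all the estimates survive in dimension $d=3$.
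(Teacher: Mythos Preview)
Your proposal is correct and very close in spirit to the paper's argument: the same energy identity for $E(u_\delta)-E(u)$, the same eigenvalue identity~(\ref{eq:estimate_eigenvalue})/(\ref{Cal_lambda}), the same Galerkin relation~(\ref{eq:new36}) combined with the coercivity~(\ref{eq:Euu-lambda}) for quasi-optimality, and the same Aubin--Nitsche duality via $\psi_{u_\delta-u}$ for the $L^2$-bound. Two organisational differences are worth noting.

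For the $H^1$-convergence you go through weak compactness, identification of the limit, and then norm convergence via $a(u_\delta,u_\delta)\to a(u,u)$; this works but requires the subsequence-to-whole-sequence step and a separate justification that $\int_\Omega F(u_\delta^2)\to\int_\Omega F(u^2)$. The paper is more direct: it first proves the lower bound $E(u_\delta)-E(u)\ge\frac{\gamma}{2}\|u_\delta-u\|_{H^1}^2$ (from~(\ref{eq:Au-lambda_1}) and convexity of $F$), then simply bounds $E(u_\delta)-E(u)\le E(\widetilde u_\delta)-E(u)\to0$ with $\widetilde u_\delta$ the $L^2$-normalisation of the best $H^1$-approximant. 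This simultaneously yields the convergence and the lower half of~(\ref{eq:error_energy}) without any compactness argument.

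For the quasi-optimal bound~(\ref{eq:error_H1}) you apply~(\ref{eq:Euu-lambda}) to $e=u_\delta-u$ and insert an arbitrary test function $v_\delta\in X_\delta$, controlling $(u_\delta,v_\delta)_{L^2}$ through $(u_\delta,u)_{L^2}=1-\frac12\|e\|_{L^2}^2$. The paper instead applies~(\ref{eq:Euu-lambda}) to $u_\delta-v_\delta$ and restricts first to competitors with $\|v_\delta\|_{L^2}=1$, so that $(u_\delta,u_\delta-v_\delta)_{L^2}=\frac12\|u_\delta-v_\delta\|_{L^2}^2$ falls out immediately; the normalisation constraint is removed only at the end by a separate elementary estimate. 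Both routes close, and the absorbable terms are the same; the paper's detour through the unit sphere merely makes the eigenvalue term slightly cleaner to handle.
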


\medskip

\begin{remark}
If $0 \le r+s \le 3$, then 
$$
\|u_\delta-u\|_{L^{6r/(5-s)}}^r \le  \|u_\delta-u\|_{L^2}^{(5-r-s)/2}
\|u_\delta-u\|_{L^6}^{(3r-5+s)/2}  
\le \|u_\delta-u\|_{L^2} \|u_\delta-u\|_{H^1}^{r-1}, 
$$
so that (\ref{eq:error_L2}) implies the simpler inequality
\begin{equation} \label{eq:error_L2_simpler}
\|u_\delta-u\|_{L^2}^2 \le C \|u_\delta-u\|_{H^1} \min_{\psi_\delta \in
  X_\delta}
\|\psi_{u_\delta-u}-\psi_\delta\|_{H^1}.
\end{equation}
\end{remark}

\medskip

\begin{proof}{\bf $\!\!\!\!\!$ of Theorem~\ref{Th:basic}}
We have
\begin{eqnarray}
\!\!\!\!\!\!\!\!\!\!\!\!\!\!\!\!\!\!
E(u_\delta ) - E(u) & = & \frac 12 \langle A_u u_\delta ,u_\delta  \rangle_{X',X} 
- \frac 12 \langle A_u u,u \rangle_{X',X} \nonumber \\
& & + \frac 12 \int_\Omega 
F\left(  u_\delta ^2 \right)
- F\left( u^2 \right)
- f\left(  u^2 \right) (u_\delta ^2-u^2) \nonumber \\
& = & \frac 12 \langle (A_u-\lambda) (u_\delta-u) ,(u_\delta-u)  \rangle_{X',X} 
\nonumber \\
& & + \frac 12 \int_\Omega 
F\left(  u_\delta \right)
- F\left( u^2 \right)
- f\left(  u^2 \right) (u_\delta ^2-u^2). \label{eq:Eudelta-Eu} 
\end{eqnarray}
Using (\ref{eq:Au-lambda_1}) and the convexity of $F$, we  
get 
$$
E(u_\delta ) - E(u) \ge  \frac \gamma 2 \|u_\delta -u\|_{H^1}^2.  
$$
Let $\Pi_\delta u \in X_\delta$ be such that
$$
\|u-\Pi_\delta u\|_{H^1} = \min \left\{ \|u-v_\delta\|_{H^1}, v_\delta
  \in X_\delta \right\}.  
$$
We deduce from (\ref{eq:densite}) that $(\Pi_\delta u)_{\delta > 0}$
converges to $u$ in $X$ when $\delta$ goes to zero. Denoting by
$\widetilde u_\delta  = \|\Pi_\delta u\|_{L^2}^{-1} \Pi_\delta u$ (which is
well defined, 
at least for $\delta$ small enough), we also have  
$$
\lim_{\delta \to 0^+} \|\widetilde u_\delta - u \|_{H^1} = 0.
$$
The functional $E$ being strongly continuous on $X$, we obtain
$$
 \|u_\delta -u\|_{H^1}^2 \le \frac 2\gamma \left(E(u_\delta ) -
   E(u)\right) \le  \frac 2\gamma \left(E(\widetilde
u_\delta ) - E(u)\right) \mathop{\longrightarrow}_{\delta \to 0^+} 0.
$$
It follows that there exists $\delta_1 > 0$ such that
$$
\forall 0 < \delta \le  \delta_1, \quad \|u_\delta\|_{H^1} \le 2
\|u\|_{H^1}, \quad \|u_\delta-u\|_{H^1} \le \frac 12.
$$
We then easily deduce from (\ref{eq:Eudelta-Eu}) the upper bounds in
(\ref{eq:error_energy}) and (\ref{eq:error_energy_2}). 

Next, we remark that
 \begin{eqnarray}
\lambda_\delta-\lambda & = &\langle E'(u_\delta ),u_\delta
\rangle_{X',X} - \langle E'(u),u \rangle_{X',X} \nonumber  \\ 
&=& a (  u_\delta, u_\delta) - a(u,u) + \int_\Omega
f(u_\delta^2)u_\delta^2 - \int_\Omega f(u^2)u^2 \nonumber \\ 
& =& a(u_\delta-u ,u_\delta-u) + 2 a(u, u_\delta-u) + \int_\Omega
 f(u_\delta^2)u_\delta^2 - \int_\Omega f(u^2)u^2 \nonumber \\
 &=&a (u_\delta-u ,u_\delta-u) + 2\lambda \int_\Omega   u  (u_\delta-u) 
- 2\int_\Omega f(u^2)u(u_\delta-u)  \nonumber \\ & & \qquad + \int_\Omega
 f(u_\delta^2)u_\delta^2 - \int_\Omega f(u^2)u^2 \nonumber \\
 &=& a (u_\delta-u ,u_\delta-u) -  \lambda \|u_\delta-u\|_{L^2}^2
- 2\int_\Omega f(u^2)u(u_\delta-u) \nonumber \\ & & \qquad + \int_\Omega
 f(u_\delta^2)u_\delta^2 - \int_\Omega f(u^2)u^2 \nonumber \\  
  &= & \langle (A_u-\lambda)(u_\delta-u ),(u_\delta-u ) \rangle_{X',X}
+   \int_\Omega w_{u,u_\delta } (u_\delta-u )
\label{Cal_lambda}
\end{eqnarray}
where
$$
w_{u,u_\delta} = u_\delta^2
\frac{f(u_\delta^2)-f(u^2)}{u_\delta-u}.
$$
As $u \in L^\infty(\Omega)$, we have
$$
|w_{u,u_\delta}| \le \left| \begin{array}{lll}
\dps 12 u \sup_{t \in (0,4\|u\|_{L^\infty}^2]} F''(t)t & \quad & \mbox{if }
|u_\delta| < 2 u \\
\dps 2 \left( |f(u_\delta^2)|+\max_{t \in [0,\|u\|_{L^\infty}^2]} |f(t)|
\right) |u_\delta| & \quad & \mbox{if }
|u_\delta| \ge 2 u,
\end{array} \right.
$$
and we deduce from assumptions (\ref{eq:Hyp7})-(\ref{eq:Hyp7p}) that
$$
|w_{u,u_\delta}| \le C (1+|u_\delta|^{2q+1}),
$$
for some constant $C$ independent of $\delta$.
Using (\ref{eq:Au-lambda_2}), we
therefore obtain that for all $0 < \delta \le \delta_1$,
\begin{eqnarray}
|\lambda_\delta-\lambda|   & \le  &  M \| u_\delta-u \|_{H^1}^2 + 
\| w_{u,u_\delta}\|_{L^{6/(2q+1)}}  \|u_\delta-u\|_{L^{6/(5-2q)}} \nonumber \\
& \le  &  M \| u_\delta-u \|_{H^1}^2 + C
(1+\|u_\delta\|_{H^1}^{2q+1}) \|u_\delta-u\|_{L^{6/(5-2q)}} \nonumber \\
& \le & C \left( \| u_\delta-u \|_{H^1}^2 + \|u_\delta-u\|_{L^{6/(5-2q)}} \right),
\label{eq:estim_36}
\end{eqnarray}
where $C$ denotes constants independent of $\delta$.

In order to evaluate the $H^1$-norm of the error $u_\delta-u$, we first
notice that 
\begin{equation}\label{eq:4:1:H1}
\forall v_\delta \in X_\delta, \quad 
\| u_\delta-u \|_{H^1} \le \| u_\delta - v_\delta  \|_{H^1} 
+\| v_\delta  - u \|_{H^1},
\end{equation}
and that 
 \begin{eqnarray}
 \| u_\delta  - v_\delta \|_{H^1}^2 & \le & \beta^{-1} \,  
\langle (E''(u) - \lambda)(u_\delta  - v_\delta ),
(u_\delta  - v_\delta ) \rangle_{X',X} \nonumber \\
& = & \beta^{-1}  \bigg( \langle (E''(u) - \lambda)(u_\delta  - u),
(u_\delta  - v_\delta ) \rangle_{X',X} \nonumber \\ 
&  & \quad \qquad +\langle (E''(u) - \lambda)(u- v_\delta ),(u_\delta  -
  v_\delta ) \rangle_{X',X} \bigg). 
 \label{eq:4:3:H1}
 \end{eqnarray}
For all $w_\delta \in X_\delta$ 
\begin{eqnarray}
\!\!\!\!\!\!\!\!\!\!\!\!\!\!\!\!\!\!\!\!\!
& & \!\!\!\!\!\!
\langle( E''(u) - \lambda)(u_\delta-u),w_\delta \rangle_{X',X} 
\nonumber \\ 
\!\!\!\!\!\!\!\!\!\!\!\!\!\!\!\!\!\!\!\!\!
& &
= - \int_\Omega \left( f(u^2_\delta) u_\delta -f(u^2)u_\delta - 2
    f'(u^2)u^2(u_\delta-u) \right) w_\delta  +(\lambda_\delta-
  \lambda)\int_\Omega u_\delta w_\delta. \label{eq:new36}
\end{eqnarray}
On the other hand, we have for all $v_\delta \in X_\delta$ such that
$\|v_\delta\|_{L^2}=1$, 
$$
\int_\Omega u_\delta (u_\delta-v_\delta) = 1 - \int_\Omega  u_\delta
v_\delta
= \frac 12 \|u_\delta-v_\delta\|_{L^2}^2.
$$
Using (\ref{eq:Hyp8}) and (\ref{eq:estim_36}), we therefore obtain that
 for all $0 < \delta \le \delta_1$ and all $v_\delta \in X_\delta$ such
 that $\|v_\delta\|_{L^2}=1$,
 \begin{eqnarray}
\left| \langle (E''(u) - \lambda)(u_\delta-u),(u_\delta-v_\delta)
  \rangle_{X',X}\right|
& \le & C \bigg( \|u_\delta-u\|_{L^{6r/(5-s)}}^r \| u_\delta-v_\delta
\|_{H^1} 
\nonumber \\ & &
\!\!\!\!\!\!\!\!\!\!\!\!\!\!\!\!\!\!\!\!\!\!\!\!\!\!\!\!\!\!\!\!\!\!\!\!\!\!\!\!\!\!\!\!\!\!\!\!\!\!\!\!\!\!\!\!\!\!\!\!   
+ \left( \|u_\delta-u\|_{H^1}^2 + \|u_\delta-u\|_{L^{6/(5-2q)}} \right) 
\| u_\delta-v_\delta \|_{L^2}^2  \bigg). \label{eq:4:3:H2}
 \end{eqnarray}
It then follows from (\ref{eq:Euu-lambda}), (\ref{eq:4:3:H1}) and
(\ref{eq:4:3:H2})  that for all $0 < \delta \le \delta_1$ and all
$v_\delta \in X_\delta$ such that $\|v_\delta\|_{L^2}=1$,  
\begin{eqnarray*}
 \| u_\delta  - v_\delta \|_{H^1} & \le &  C 
 \left(   \|u_\delta-u\|_{H^1}^r + \|u_\delta-u\|_{H^1} \| u_\delta  -
   v_\delta \|_{H^1}  + \|v_\delta-u \|_{H^1} \right) .
\end{eqnarray*}
Combining with (\ref{eq:4:1:H1}) we obtain that there exists $0 <
\delta_2 \le 
\delta_1$ and $C \in \R_+$ such that for all $0 < \delta \le \delta_2$ and all
$v_\delta \in X_\delta$ such that $\|v_\delta\|_{L^2}=1$, 
$$
\|u_\delta -u \|_{H^1} \le C  \|v_\delta-u \|_{H^1}.   
$$
Hence, for all $0 < \delta \le \delta_2$
$$
\|u_\delta -u \|_{H^1} \le C J_\delta \qquad \mbox{where} \qquad 
J_\delta = \min_{v_\delta \in X_\delta \, | \, 
\|v_\delta\|_{L^2}=1} \|v_\delta-u \|_{H^1}.   
$$
We now denote by
$$
\widetilde J_\delta = \min_{v_\delta \in X_\delta} \|v_\delta-u \|_{H^1},
$$
and by $u_\delta^0$ a minimizer of the above minimization problem. We
know from (\ref{eq:densite}) that $u_\delta^0$ converges to $u$ in $H^1$
when $\delta$ goes to zero. Besides,
\begin{eqnarray*}
J_\delta & \le & \|u_\delta^0/\|u_\delta^0\|_{L^2} - u \|_{H^1} \\
& \le & \|u_\delta^0 - u \|_{H^1} +
\frac{\|u_\delta^0\|_{H^1}}{\|u_\delta^0\|_{L^2}} 
\left| 1- \|u_\delta^0\|_{L^2} \right| \\
& \le & \|u_\delta^0 - u \|_{H^1} +
\frac{\|u_\delta^0\|_{H^1}}{\|u_\delta^0\|_{L^2}} 
\|u-u_\delta^0\|_{L^2} \\
& \le & \left( 1 +  \frac{\|u_\delta^0\|_{H^1}}{\|u_\delta^0\|_{L^2}}
\right) \widetilde J_\delta.
\end{eqnarray*} 
For $0 < \delta \le \delta_2 \le \delta_1$, we have
$\|u_\delta^0-u\|_{H^1} \le \|u_\delta-u\|_{H^1} \le 1/2$, and therefore
$\|u_\delta^0\|_{H^1} \le \|u\|_{H^1}+1/2$ and $\|u_\delta^0\|_{L^2} \ge
1/2$, yielding $J_\delta \le 2(\|u\|_{H^1}+1) \widetilde J_\delta$.
Thus (\ref{eq:error_H1}) is proved.

Let $u_\delta ^*$ be the orthogonal projection, for the $L^2$ inner product, of 
$u_\delta$ on the affine space $\left\{v \in L^2(\Omega) \, | \, \int_\Omega uv=1
\right\}$. One has 
$$
u_\delta ^* \in X, \qquad u_\delta ^*-u \in u^\perp,  \qquad 
u_\delta^*-u_\delta  = \frac 12 \|u_\delta-u \|_{L^2}^2 u, 
$$
from which we infer that
\begin{eqnarray*}
\|u_\delta -u\|_{L^2}^2 & = & \int_\Omega (u_\delta-u )(u_\delta ^*-u) + 
\int_\Omega (u_\delta-u)(u_\delta - u_\delta ^*) \\
& = &  \int_\Omega (u_\delta-u)(u_\delta^*-u) - \frac 12
\|u_\delta-u\|_{L^2}^2 \int_\Omega (u_\delta-u) u  \\
& = &  \int_\Omega (u_\delta-u)(u_\delta^*-u) + \frac 12
\|u_\delta-u\|_{L^2}^2 \left( 1 - \int_\Omega u_\delta u \right) \\
& = &  \int_\Omega (u_\delta-u)(u_\delta^*-u) + \frac 14 \|u_\delta-u\|_{L^2}^4 \\
& = &   \langle u_\delta-u  ,u_\delta^*-u \rangle_{X',X}  + \frac 14
\|u_\delta-u \|_{L^2}^4 \\ 
& = &   \langle (E''(u)-\lambda) \psi_{u_\delta-u}, u_\delta^*-u
\rangle_{X',X} + \frac 14 \|u_\delta-u \|_{L^2}^4 \\
& = &   \langle  (E''(u)-\lambda)  (u_\delta-u), \psi_{u_\delta-u}
\rangle_{X',X} \\ 
& & + \frac 12 \|u_\delta-u\|_{L^2}^2 
 \langle  (E''(u)-\lambda) u , \psi_{u_\delta-u} \rangle_{X',X} 
+ \frac 14 \|u_\delta-u \|_{L^2}^4 \\ 
& = &   \langle  (E''(u)-\lambda)  (u_\delta-u), \psi_{u_\delta-u}
\rangle_{X',X} \\ 
& & +  \|u_\delta-u\|_{L^2}^2 \int_\Omega f'(u^2) u^3 \psi_{u_\delta-u} 
+ \frac 14 \|u_\delta-u \|_{L^2}^4 .
\end{eqnarray*}
For all $\psi_\delta \in X_\delta$, it therefore holds
\begin{eqnarray*}
\|u_\delta-u\|_{L^2}^2 & = &   
\langle  (E''(u)-\lambda)  (u_\delta-u),\psi_\delta
\rangle_{X',X} \\
& & + \langle  (E''(u)-\lambda)(u_\delta-u),\psi_{u_\delta-u}-\psi_\delta
\rangle_{X',X} \\ 
& &  + \|u_\delta-u\|_{L^2}^2 \int_\Omega f'(u^2) u^3 \psi_{u_\delta-u} 
+ \frac 14 \|u_\delta-u \|_{L^2}^4 . 
\end{eqnarray*}
From (\ref{eq:new36}), we obtain that for all $\psi_\delta \in X_\delta
\cap u^\perp$,
\begin{eqnarray*}
\!\!\!\!\!\!\!\!\!\!\!\!\!\!\!\!\!\!\!\!\!
& & \!\!\!\!\!\!
\langle( E''(u) - \lambda)(u_\delta-u),\psi_\delta \rangle_{X',X} 
 \\ 
\!\!\!\!\!\!\!\!\!\!\!\!\!\!\!\!\!\!\!\!\!
& &
= - \int_\Omega \left( f(u^2_\delta) u_\delta -f(u^2)u_\delta - 2
    f'(u^2)u^2(u_\delta-u) \right) \psi_\delta  +(\lambda_\delta-
  \lambda)\int_\Omega (u_\delta-u) \psi_\delta
\end{eqnarray*}
and therefore that for all $\psi_\delta \in X_\delta
\cap u^\perp$,
 \begin{eqnarray}
\left| \langle (E''(u) - \lambda)(u_\delta-u),\psi_\delta
  \rangle_{X',X}\right|
& \le & C \bigg( \|u_\delta-u\|_{L^{6r/(5-s)}}^r 
\nonumber \\ & &
\!\!\!\!\!\!\!\!\!\!\!\!\!\!\!\!\!\!\!\!\!\!\!\!\!\!\!\!\!\!\!\!\!\!\!\!\!\!\!\!\!\!\!\!\!\!\!\!\!\!\!\!\!\!\!\!\!\!\!\!   
+ \| u_\delta-u \|_{L^{6/5}} 
\left( \|u_\delta-u\|_{H^1}^2 + \|u_\delta-u\|_{L^{6/(5-2q)}} \right) 
 \bigg) \|\psi_\delta\|_{H^1} . \label{eq:new37}
 \end{eqnarray}

Let $\psi_\delta^0 \in X_\delta \cap u^\perp$ be such that
$$
\|\psi_{u_\delta-u}-\psi_\delta^0\|_{H^1} = \min_{\psi_\delta \in
  X_\delta  \cap u^\perp} \|\psi_{u_\delta-u}-\psi_\delta\|_{H^1}.
$$  
Noticing that $\|\psi_\delta^0\|_{H^1} \le  \|\psi_{u_\delta-u}\|_{H^1}
\le \beta^{-1} M\|u_\delta-u\|_{L^2}$, we obtain from 
(\ref{eq:Euu-lambda}) and (\ref{eq:new37}) that
there exists $C \in 
\R_+$ such that for all $0 < \delta \le \delta_1$,
\begin{eqnarray*}
\|u_\delta-u\|_{L^2}^2 &\le & C \bigg( 
  \|u_\delta-u\|_{L^2} \\ & & \!\!\!\!\!\!\!\!\!\!\!\!\!\!\!\!\!
\times  \left( \|u_\delta-u\|_{L^{6r/(5-s)}}^r +
\|u_\delta-u\|_{L^{6/5}}
\left( \|u_\delta-u\|_{H^1}^2 + \|u_\delta-u\|_{L^{6/(5-2q)}} \right) \right)
\\
& &   + \|u_\delta-u\|_{H^1}
\|\psi_{u_\delta-u}-\psi_\delta^0\|_{H^1} 
+ \|u_\delta-u\|_{L^2}^3 + 
\|u_\delta-u\|_{L^2}^4 \bigg).
\end{eqnarray*}
Therefore, there exists $0 < \delta_0 \le \delta_2$ and $C \in \R_+$
such that for all $0 < \delta \le \delta_0$,
$$
\|u_\delta-u\|_{L^2}^2 \le  C \, \bigg(
  \|u_\delta-u\|_{L^2} \|u_\delta-u\|_{L^{6r/(5-s)}}^r
+   \|u_\delta-u\|_{H^1}
\|\psi_{u_\delta-u}-\psi_\delta^0\|_{H^1} \bigg). 
$$
Lastly, denoting by $\Pi_{X_\delta}^0$ the orthogonal projector on
$X_\delta$ for the $L^2$ inner product, a simple calculation leads to
\begin{equation} \label{eq:minuperp}
\forall v \in u^\perp, \quad \min_{v_\delta \in X_\delta  \cap u^\perp}
\|v_\delta - v \|_{H^1}
\le \left( 1 + \frac{\|\Pi_{X_\delta}^0 u \|_{H^1}} 
  {\|\Pi_{X_\delta}^0 u \|_{L^2}^2} \right) 
\min_{v_\delta \in X_\delta} \|v_\delta - v \|_{H^1},
\end{equation}
which completes the proof of Theorem~\ref{Th:basic}.
\end{proof}

\begin{remark} \label{rem:negative_Sobolev}
In the proof of Theorem~\ref{Th:basic}, we have obtained bounds on
$|\lambda_\delta-\lambda|$ from (\ref{Cal_lambda}), using $L^p$
estimates on $w_{u,u_\delta}$ and $(u_\delta-u)$ to control the second
term of the right hand side. Remarking that 
\begin{eqnarray*}
\nabla w_{u,u_\delta} &=& -u \,
\frac{f(u^2)u-f(u_\delta^2)u-2f'(u_\delta^2)u_\delta^2(u-u_\delta)}{(u_\delta-u)^2}
\, \nabla u_\delta \\ 
&& -u_\delta \,
\frac{f(u_\delta^2)u_\delta-f(u^2)u_\delta-2f'(u^2)u^2(u_\delta-u)}{(u_\delta-u)^2}
\, \nabla u \\ 
&& + 2uu_\delta \left(f'(u_\delta^2) \, \nabla u_\delta+f'(u^2)\nabla
  u\right)
+ 2u_\delta \,\frac{f(u_\delta^2)-f(u^2)}{u_\delta-u} \,  \nabla u_\delta \, ,
\end{eqnarray*}
we can see that if $u_\delta$ is
uniformly bounded in $L^\infty(\Omega)$ and if $F$ satisfies (\ref{eq:Hyp8})
for $r=2$ and is such that $F''(t)t^{1/2}$ is bounded in the vicinity of
$0$, then $w_{u,u_\delta}$ is
uniformly bounded in $X$. It then follows from (\ref{Cal_lambda}) that
$$
|\lambda_\delta-\lambda| \le C \left(\|u_\delta-u\|_{H^1}^2 +
  \|u_\delta-u\|_{X'} \right),
$$
an estimate which is an improvement of (\ref{eq:error_lambda}). In the
next two sections, we will see that this approach (or analogous strategies
making use of negative Sobolev norms of higher orders),
can be used in certain cases to obtain optimal estimates on
$|\lambda_\delta-\lambda|$ of the form  
$$
|\lambda_\delta-\lambda| \le C \|u_\delta-u\|_{H^1}^2,
$$
similar to what is obtained for the linear eigenvalue problem $-\Delta u
+ V u = \lambda u$.
\end{remark}

\section{Fourier expansion}
\label{sec:Fourier}

In this section, we consider the problem
\begin{equation} \label{eq:Fourier}
\inf \left\{ E(v), \; v \in X, \; \int_\Omega v^2=1 \right\},
\end{equation}
where
\begin{eqnarray}
&& \Omega=(0,2\pi)^d, \quad \mbox{with $d=1$, $2$ or $3$,} \nonumber \\
&& X=H^1_\#(\Omega), 
\nonumber \\
&& E(v) = \frac 12 \int_\Omega |\nabla v|^2 + \frac 12 \int_\Omega V v^2
+ \frac {1}{2} \int_\Omega F(v^2).
\nonumber
\end{eqnarray}
We assume that $V \in H^\sigma_\#(\Omega)$ for some $\sigma > d/2$ and
that the function $F$ satisfies (\ref{eq:Hyp6})-(\ref{eq:Hyp7p}),
(\ref{eq:Hyp8}) for some $1 < r \le 2$ and $0 \le s \le 5-r$, and is 
in $C^{[\sigma]+1,\sigma-[\sigma]+\epsilon}((0,+\infty),\R)$ (with the convention
that $C^{k,0}=C^k$ if $k \in \N$). 

The positive solution $u$ to (\ref{eq:Fourier}), which satisfies the
elliptic equation
$$
-\Delta u + V u + f(u^2) u = \lambda u,
$$
then is in $H^{\sigma+2}_\#(\Omega)$ and is bounded away from $0$. To
obtain this result, we have used the fact \cite{Sickel} that if $s >
d/2$, $g \in 
C^{[s],s-[s]+\epsilon}(\R,\R)$ and $v \in H^s_\#(\Omega)$, then $g(v) \in
H^s_\#(\Omega)$. 

A natural discretization of (\ref{eq:Fourier}) consists in using a
Fourier 
basis. Denoting by $e_k(x) = (2\pi)^{-d/2} e^{ik \cdot x}$, we have
for all $v \in L^2(\Omega)$,   
$$
v(x) = \sum_{\k\in \Z^d} \widehat v_k e_k(x), 
$$
where $\widehat v_k$ is the $k^{\rm th}$ Fourier coefficient of $v$:
$$
\widehat v_k := \int_\Omega v(x) \, \overline{e_k(x)} \, dx
= (2\pi)^{-d/2} \int_\Omega v(x) \, e^{-ik \cdot x} \, dx.
$$
The approximation of the solution to (\ref{eq:Fourier}) by the spectral
Fourier approximation is based on the choice 
$$
X_\delta = \widetilde X_N= \mbox{Span}\{ e_k, \; |k|_*\le N \},
$$ 
where $|k|_*$ denotes either the $l^2$-norm or the $l^\infty$-norm of $k$
(i.e. either $|k|=(\sum_{i=1}^d|k_i|^2)^{1/2}$ or $|k|_\infty=\max_{1 \le i \le
  d}|k_i|$). 
For convenience, the discretization parameter for this approximation
will be denoted as $N$. 

Endowing $H^r_\#(\Omega)$ with the norm defined by
$$
\|v\|_{H^r} = \left( \sum_{k \in \Z^d} \left(1+|k|_*^2\right)^r |\widehat v_k|^2
\right)^{1/2}, 
$$
we obtain that for all $s \in \R$, and all $v \in H^s_\#(\Omega)$, the
best approximation of $v$ in $H^r_\#(\Omega)$ for any $r \le s$ is
$$
\Pi_N v = \sum_{\k\in \Z^d, |\k|_* \le N} \widehat v_k e_k.
$$
The more regular $v$ (the regularity being measured in terms of the
Sobolev norms $H^r$), the faster the convergence of this truncated
series to $v$: for all real numbers $r$ and $s$ with $r \le s$, we
have  
\begin{equation}
\label{eq:app-Fourier}
\forall v\in H^s_\#(\Omega), \quad \|v - \Pi_Nv\|_{H^r} \le
\frac{1}{N^{s-r}} \|v\|_{H^s}.
\end{equation}
Let $u_N$ be a solution to the variational problem 
$$
\inf \left\{E(v_N), \; v_N \in \widetilde X_N, \; \int_\Omega v_N^2 = 1
\right\} 
$$
such that $(u_N,u)_{L^2} \ge 0$. 
Using (\ref{eq:app-Fourier}), we obtain
$$
\|u-\Pi_Nu\|_{H^1}  \le  \frac{1}{N^{\sigma+1}} \|u\|_{H^{\sigma+2}},
$$ 
and it therefore follows from the first assertion of
Theorem~\ref{Th:basic} that 
$$
\lim_{N \to \infty} \|u_N-u\|_{H^1} = 0.
$$
We then observe that $u_N$ is solution to the elliptic equation
\begin{equation}
\label{eq:6N} 
-\Delta u_N + \Pi_N \left[ Vu_N+f(u_N^2)u_N \right] = \lambda_N u_N.
\end{equation}
Thus $u_N$ is uniformly bounded in $H^2_\#(\Omega)$, hence in
$L^\infty(\Omega)$, and 
\begin{eqnarray} \label{eq:6Nbis} 
\Delta \left(u_N-u\right) & = & \Pi_N \left( V(u_N-u)+f(u_N^2)u_N-f(u^2)u
\right) \nonumber \\ & & - (I-\Pi_N) (Vu+f(u^2)u) - \lambda_N(u_N-u) -
(\lambda_N-\lambda) u.
\end{eqnarray}
As $(u_N)_{N \in \N}$ in bounded in $L^\infty(\Omega)$ and converges to
$u$ in $H^1_\#(\Omega)$, the right 
hand side of the above equality converges to $0$ in $L^2_\#(\Omega)$,
which implies that $(u_N)_{N \in \N}$ converges to $0$ in
$H^2_\#(\Omega)$, and therefore in $C^0_\#(\Omega)$. In particular,
$u/2 \le u_N \le 2u$ on $\Omega$ for $N$ large enough, so that we can
assume in our analysis, without loss of generality, that $F$
satisfies (\ref{eq:Hyp7}) with $q=0$ and (\ref{eq:Hyp8}) with $r=2$ and $s=0$.
We also deduce from (\ref{eq:6N}) that $u_N$ 
converges to $u$ in $H^{\sigma+2}_\#(\Omega)$.

Besides, the unique solution to (\ref{eq:adjoint}) solves the elliptic
equation 
\begin{eqnarray} \!\!\!\!\!\!\!\!\!\!\!\!\!\!\!\!
&& - \Delta \psi_w + \left( V+f(u^2)+2f'(u^2)u^2-\lambda \right) \psi_w
\nonumber \\ & & \qquad\qquad =
2 \left(\int_\Omega f'(u^2)u^3\psi_w\right) u + w - (w,u)_{L^2} u,
\label{eq:phi_w_Fourier}
\end{eqnarray}
from which we infer that $\psi_{u_N-u} \in H^2_\#(\Omega)$ and
$\|\psi_{u_N-u}\|_{H^2} \le C \|u_N-u\|_{L^2}$. Hence,
$$
\|\psi_{u_N-u}-\Pi_{N}\psi_{u_N-u}\|_{H^1}  \le  \frac{1}{N}
\|\psi_{u_N-u}\|_{H^2} \le  \frac{C}{N} \|u_N-u\|_{L^2}.
$$ 
We therefore deduce from Theorem~\ref{Th:basic} that
\begin{eqnarray} \!\!\!\!\!\!\!\!\!\!\!
\|u_N-u\|_{H^s} & \le & \frac{C}{N^{\sigma+2-s}} \qquad 
\mbox{for } s=0 \mbox{ and } s=1  \label{eq:error_H1_F_0} \\
\!\!\!\!\!\!\!\!\!\!\!
|\lambda_N-\lambda| & \le &  \frac{C}{N^{\sigma+2}}
\label{eq:error_lambda_F} \\
\!\!\!\!\!\!\!\!\!\!\!
\frac \gamma 2 \|u_N-u\|_{H^1}^2 \le E(u_N)-E(u) & \le & C
\|u_N-u\|_{H^1}^2.
\nonumber
\end{eqnarray}
From (\ref{eq:error_H1_F_0}) and the inverse inequality
$$
\forall v_N \in \widetilde X_N, \quad 
\|v_N\|_{H^r} \le 2^{(r-s)/2}  N^{r-s} \|v_N\|_{H^s}, 
$$
which holds true for all $s \le r$ and all $N \ge 1$,
we then obtain using classical arguments that
\begin{equation}  \label{eq:error_H1_F}
\|u_N-u\|_{H^s}  \le  \frac{C}{N^{\sigma+2-s}} \qquad 
\mbox{for all } 0 \le s < \sigma+2.
\end{equation}
The estimate (\ref{eq:error_lambda_F}) is slightly deceptive since, in the case
of a  linear eigenvalue problem (i.e. for $-\Delta u +V u = \lambda u$)
the convergence of the eigenvalues goes twice as fast as the convergence
of the eigenvector in the $H^1$-norm. We are going to prove that this is
also the case for the nonlinear eigenvalue problem under study in this
section, at least under the assumption that $F \in
C^{[\sigma]+2,\sigma-[\sigma]+\epsilon}((0,+\infty),\R)$. 

Let us first come back to (\ref{Cal_lambda}), which we rewrite as,
\begin{equation}  \label{eq:lambda_N}
\lambda_N-\lambda = 
 \langle (A_u-\lambda)(u_N-u ),(u_N-u ) \rangle_{X',X}
+   \int_\Omega w_{u,u_N } (u_N-u )
\end{equation}
with 
$$
w_{u,u_N} = u_N^2 \frac{f(u_N^2)-f(u^2)}{u_N-u} = u_N^2 (u_N+u) 
\frac{f(u_N^2)-f(u^2)}{u_N^2-u^2} .
$$
As $u/2 \le u_N \le 2u$ on $\Omega$ for $N$ large enough, as $u_N$
converges, hence is 
uniformly bounded, in $H^{\sigma+2}_\#(\Omega)$ and as $f \in
C^{[\sigma]+1,\sigma-[\sigma]+\epsilon}([\|u\|_{L^\infty}^2/4,4\|u\|_{L^\infty}^2],\R)$,
we obtain that
$w_{u,u_N}$ is uniformly bounded in  
$H^{\sigma}_\#(\Omega)$ (at least for $N$ large enough). We therefore
infer from (\ref{eq:lambda_N}) that for $N$ large enough
\begin{equation}   \label{eq:lambda_N_2}
|\lambda_N-\lambda| \le C \left( \|u_N -u\|_{H^1}^2 + \|u_N-u\|_{H^{-\sigma}}
\right). 
\end{equation}

Let us now compute the $H^{-r}$-norm of the error for $0 < r \le
\sigma$. Let $w \in
H^{r}_\#(\Omega)$. Proceeding as in
Section~\ref{sec:basic}, we obtain  
\begin{eqnarray}
 \int_\Omega w (u_N-u) & = &
\langle  (E''(u)-\lambda)(u_N-u), \Pi^1_{\widetilde X_N \cap u^\perp} \psi_{w}
\rangle_{X',X}  \nonumber \\
& & + \langle  (E''(u)-\lambda)(u_N-u),\psi_{w}-\Pi^1_{\widetilde X_N \cap u^\perp}\psi_{w}
\rangle_{X',X} \nonumber \\ 
& & + \|u_N-u\|_{L^2}^2 \int_\Omega f'(u^2) u^3 \psi_{w} 
- \frac 12 \|u_N-u \|_{L^2}^2 \int_\Omega uw, \label{eq:intom}
\end{eqnarray}
where $\Pi^1_{\widetilde X_N \cap u^\perp}$ denotes the
orthogonal projector on $\widetilde X_N \cap u^\perp$ for
the $H^1$ inner product.
We then get from (\ref{eq:phi_w_Fourier}) that $\psi_w$ is in
$H^{r+2}_\#(\Omega)$ and satisfies 
\begin{equation} \label{eq:borne_psiw}
\|\psi_w\|_{H^{r+2}} \le C \|w\|_{H^r},
\end{equation}
for some constant $C$ independent of $w$. 

Combining (\ref{eq:Euu-lambda}), (\ref{eq:new37}), (\ref{eq:minuperp}), 
(\ref{eq:error_H1_F}),
(\ref{eq:lambda_N}), (\ref{eq:intom}) and (\ref{eq:borne_psiw}), we
obtain that there exists a constant $C \in \R_+$ such that for all $N
\in \N$ and all $w \in H^{r}_\#(\Omega)$,
\begin{eqnarray*}
\int_\Omega w (u_N-u) & \le & C' \left( \|u_N-u\|_{L^2}^2 + N^{-(r+1)} 
\|u_N-u\|_{H^1} \right) \|w\|_{H^r} \\
 & \le & \frac{C}{N^{\sigma+2+r}} \|w\|_{H^{r}} .
\end{eqnarray*}

Therefore
\begin{equation} \label{eq:Hm1boundFourier}
\|u_N-u\|_{H^{-r}} = \sup_{w \in H^{r}_\#(\Omega) \setminus
  \left\{0\right\}} \frac{\dps \int_\Omega w (u_N-u)}{\|w\|_{H^{r}}}
\le \frac{C}{N^{\sigma+2+r}},
\end{equation}
for some constant $C \in \R_+$ independent of $N$.
Using (\ref{eq:error_H1_F}) and (\ref{eq:lambda_N_2}), we end up with 
$$
|\lambda_N-\lambda| \le  \frac{C}{N^{2(\sigma+1)}}.
$$

We can summarize the results obtained in this
section in the following theorem.

\medskip

\begin{thm} \label{Th:Fourier}
Assume that $V \in H^\sigma_\#(\Omega)$ for some $\sigma > d/2$ and
that the function $F$ satisfies (\ref{eq:Hyp6})-(\ref{eq:Hyp7p}) and is
in $C^{[\sigma]+1,\sigma-[\sigma]+\epsilon}((0,+\infty),\R)$. Then $(u_N)_{N \in
  \N}$ converges to $u$ in $H^{\sigma+2}_\#(\Omega)$ and there exists $C \in
\R_+$ such that for all $N \in \N$, 
\begin{eqnarray}
 \!\!\!\!\!\!\!\!\!\!\!
\|u_N-u\|_{H^s} & \le & \frac{C}{N^{\sigma+2-s}} \qquad 
\mbox{for all } -\sigma \le s < \sigma+2 
\label{eq:H1boundFourier} \\
 \!\!\!\!\!\!\!\!\!\!\!
|\lambda_N-\lambda| & \le &  \frac{C}{N^{\sigma+2}} \nonumber
 \\
 \!\!\!\!\!\!\!\!\!\!\!
\frac \gamma 2 \|u_N-u\|_{H^1}^2 \le E(u_N)-E(u) & \le & C \|u_N-u\|_{H^1}^2.
\end{eqnarray}
If, in addition, $F \in
C^{[\sigma]+2,\sigma-[\sigma]+\epsilon}((0,+\infty),\R)$, then 
\begin{equation} \label{eq:lambdaboundFourier}
|\lambda_N-\lambda| \le   \frac{C}{N^{2(\sigma+1)}}. 
\end{equation} 
\end{thm}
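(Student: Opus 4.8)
The plan is to assemble the estimates already derived in this section and to check that they combine to give each assertion; no genuinely new ingredient is needed, but the bookkeeping must be done with care.

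\textbf{Step 1: regularity of the limit, positivity, and uniform regularity of $u_N$.} First I would establish $u \in H^{\sigma+2}_\#(\Omega)$ with $\inf_\Omega u > 0$ by an elliptic bootstrap on $-\Delta u = (\lambda - V - f(u^2))u$. Since $\sigma > d/2$ we have $H^\sigma_\# \hookrightarrow L^\infty$; since $u \in H^1_\# \hookrightarrow L^6$ and $|f(u^2)u| \le C(|u|+|u|^{2q+1})$ with $2q+1 < 5$ by (\ref{eq:Hyp7}), the right-hand side lies in some $L^{p_0}$ with $p_0 > 1$, and iterating $W^{2,p}$-regularity reaches $u \in H^2_\# \hookrightarrow L^\infty$; then, using that $v \mapsto v^2$ and $v \mapsto f(v^2)$ map $H^s_\#$ into $H^s_\#$ for $s > d/2$ when the composed function lies in $C^{[s],s-[s]+\epsilon}$ (the result of \cite{Sickel}), a further bootstrap gives $u \in H^{\sigma+2}_\#$; since $u > 0$ in $\Omega$ (cf.\ the Appendix) and $u$ is now continuous on the compact torus $\overline\Omega$, $\inf_\Omega u > 0$. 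Theorem~\ref{Th:basic} together with $\|u-\Pi_N u\|_{H^1} \le N^{-(\sigma+1)}\|u\|_{H^{\sigma+2}}$ (from (\ref{eq:app-Fourier})) gives $\|u_N-u\|_{H^1}\to 0$; the Euler equation (\ref{eq:6N}) then shows $(u_N)_N$ is bounded in $H^2_\#$, hence in $L^\infty$, and feeding this into (\ref{eq:6Nbis}) gives $u_N \to u$ in $H^2_\#$, hence in $C^0_\#$, so $u/2 \le u_N \le 2u$ for $N$ large, after which one last bootstrap on (\ref{eq:6N}) yields $u_N \to u$ in $H^{\sigma+2}_\#$. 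Since $u_N$ and $u$ then take values in a fixed compact subset of $(0,+\infty)$, I may assume without loss of generality, as in the text, that $F$ satisfies (\ref{eq:Hyp7}) with $q = 0$ and (\ref{eq:Hyp8}) with $r = 2$, $s = 0$ (and $F''$ bounded near $0$).

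\textbf{Step 2: the $H^s$ bounds for $0 \le s < \sigma+2$, the first-order eigenvalue bound, and the energy estimate.} Applying (\ref{eq:error_H1}) with $X_\delta = \widetilde X_N$ gives $\|u_N-u\|_{H^1} \le C\min_{v_N \in \widetilde X_N}\|v_N-u\|_{H^1} \le C N^{-(\sigma+1)}$. From the adjoint equation (\ref{eq:phi_w_Fourier}) one gets $\psi_{u_N-u} \in H^2_\#$ with $\|\psi_{u_N-u}\|_{H^2} \le C\|u_N-u\|_{L^2}$, so $\min_{\psi_\delta}\|\psi_{u_N-u}-\psi_\delta\|_{H^1} \le C N^{-1}\|u_N-u\|_{L^2}$, and then (\ref{eq:error_L2_simpler}) yields $\|u_N-u\|_{L^2} \le C N^{-1}\|u_N-u\|_{H^1} \le C N^{-(\sigma+2)}$. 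Interpolating between $L^2$ and $H^1$ for $0 \le s \le 1$, and combining the inverse inequality (applied to $u_N-\Pi_N u \in \widetilde X_N$) with (\ref{eq:app-Fourier}) for $1 < s < \sigma+2$, I would obtain $\|u_N-u\|_{H^s} \le C N^{-(\sigma+2-s)}$ for all $0 \le s < \sigma+2$. The first-order eigenvalue bound follows from (\ref{eq:error_lambda}): with $q=0$ the term $\|u_N-u\|_{L^{6/5}}$ is controlled by $\|u_N-u\|_{L^2} \le C N^{-(\sigma+2)}$, while $\|u_N-u\|_{H^1}^2 \le C N^{-2(\sigma+1)} \le C N^{-(\sigma+2)}$ since $\sigma > 0$; hence $|\lambda_N-\lambda| \le C N^{-(\sigma+2)}$. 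The two-sided energy estimate is exactly (\ref{eq:error_energy_2}), which applies because $F''$ is bounded on the relevant compact interval.

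\textbf{Step 3: negative Sobolev norms and the improved eigenvalue bound.} Under the additional hypothesis $F \in C^{[\sigma]+2,\sigma-[\sigma]+\epsilon}((0,+\infty),\R)$, I would rewrite (\ref{Cal_lambda}) as (\ref{eq:lambda_N}); since $u_N$ is uniformly bounded in $H^{\sigma+2}_\#$, $u/2 \le u_N \le 2u$, and $f = F' \in C^{[\sigma]+1,\sigma-[\sigma]+\epsilon}$ on the relevant interval, the weight $w_{u,u_N}$ is uniformly bounded in $H^\sigma_\#$ (again via \cite{Sickel}), giving $|\lambda_N-\lambda| \le C(\|u_N-u\|_{H^1}^2 + \|u_N-u\|_{H^{-\sigma}})$, i.e.\ (\ref{eq:lambda_N_2}). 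It then remains to prove $\|u_N-u\|_{H^{-r}} \le C N^{-(\sigma+2+r)}$ for $0 < r \le \sigma$, which I would do by the duality identity (\ref{eq:intom}): for $w \in H^r_\#$, the term involving $\Pi^1_{\widetilde X_N \cap u^\perp}\psi_w$ is bounded by the Galerkin-type estimate (\ref{eq:new37}) — which, since $u_N \to u$ in $L^\infty$ and $u_N-u$ is bounded in every $L^p$, $p \le 6$, is $O(\|u_N-u\|_{L^2}^2)\,\|w\|_{H^r}$ — while the remaining term is controlled using the elliptic regularity $\|\psi_w\|_{H^{r+2}} \le C\|w\|_{H^r}$ of (\ref{eq:borne_psiw}) together with (\ref{eq:app-Fourier}) and (\ref{eq:minuperp}), which give $\|\psi_w - \Pi^1_{\widetilde X_N\cap u^\perp}\psi_w\|_{H^1} \le C N^{-(r+1)}\|w\|_{H^r}$; inserting $\|u_N-u\|_{L^2} \le C N^{-(\sigma+2)}$ and $\|u_N-u\|_{H^1} \le C N^{-(\sigma+1)}$ from Step~2 produces $|\int_\Omega w(u_N-u)| \le C N^{-(\sigma+2+r)}\|w\|_{H^r}$, i.e.\ (\ref{eq:Hm1boundFourier}); this is also the $H^s$ bound of the theorem for $-\sigma \le s < 0$. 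Taking $r = \sigma$ in (\ref{eq:lambda_N_2}) gives $|\lambda_N-\lambda| \le C N^{-2(\sigma+1)}$.

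\textbf{Main obstacle.} The delicate part is Step~3, the nonlinear Aubin--Nitsche duality estimate. Two things must go right: the adjoint solution $\psi_w$ must be regular enough, which forces the elliptic regularity analysis of (\ref{eq:phi_w_Fourier}) and hence the stronger assumption on $F$ (entering through the regularity of the coefficient $2f'(u^2)u^2$ and of $w_{u,u_N}$); and the residual of $u_N-u$ tested against the \emph{discrete} adjoint solution must be controlled, for which the nonlinear term has to be expanded to second order as in (\ref{eq:new36}) and estimated by (\ref{eq:Hyp8}). Keeping track of which powers of $N$ and which $L^p$/Sobolev norms appear, so that every contribution is $O(N^{-(\sigma+2+r)})$, is the heart of the argument. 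The preliminary uniform $H^{\sigma+2}_\#$ bound on $u_N$ and the reduction to $u/2 \le u_N \le 2u$ are essential prerequisites, since the composition estimate of \cite{Sickel} that makes $w_{u,u_N}$ and the adjoint coefficients regular is available only on a compact subset of $(0,+\infty)$.
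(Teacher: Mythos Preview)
Your proposal is correct and follows essentially the same route as the paper's own argument in Section~\ref{sec:Fourier}: the bootstrap on (\ref{eq:6N})--(\ref{eq:6Nbis}) to get $u_N\to u$ in $H^{\sigma+2}_\#$ and the reduction to $u/2\le u_N\le 2u$, then Theorem~\ref{Th:basic} plus (\ref{eq:app-Fourier}) for the $H^1$, $L^2$, energy and first-order eigenvalue bounds, the inverse inequality for the positive $H^s$ range, and finally the duality identity (\ref{eq:intom}) with the adjoint regularity (\ref{eq:borne_psiw}) and the $H^\sigma$ bound on $w_{u,u_N}$ for the negative norms and the improved eigenvalue estimate. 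One small remark: your claim that the first term in (\ref{eq:intom}) is ``$O(\|u_N-u\|_{L^2}^2)$'' is slightly loose (as is the paper's corresponding line); what matters is that all the nonlinear contributions from (\ref{eq:new37}) are of order at least $\|u_N-u\|_{H^1}^2$, hence dominated by the linear term $N^{-(r+1)}\|u_N-u\|_{H^1}$ for $r\le\sigma$.
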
 

\medskip

In order to evaluate the quality of the error bounds obtained in
Theorem~\ref{Th:Fourier}, we have performed numerical tests with $\Omega
= (0,2\pi)$, $V(x)=\sin(|x-\pi|/2)$ and $F(t^2)=t^{2}/2$. The Fourier
coefficients of the potential $V$ are given by 
\begin{equation} \label{eq:coeff_Fourier}
\widehat V_k = - \frac{1}{\sqrt{2\pi}} \frac{1}{|k|^2- \frac 14},
\end{equation}
from which we deduce that $V \in H^\sigma_\#(0,2\pi)$ for all $\sigma <
3/2$. It can be see on Figure~1 that  $\|u_N-u\|_{H^1}$,
  $\|u_N-u\|_{L^2}$, $\|u_N-u\|_{H^{-1}}$, and $|\lambda_N-\lambda|$
  decay respectively as $N^{-2.67}$, $N^{-3.67}$, $N^{-4.67}$ and
  $N^{-5}$ (the reference values for $u$ and $\lambda$ are those obtained
  for $N=65$). These results are in good agreement with the upper
  bounds (\ref{eq:H1boundFourier}) (for $s=1$ and $s=0$), 
  (\ref{eq:Hm1boundFourier}) (for $r=1$) and
  (\ref{eq:lambdaboundFourier}), which 
  respectively decay as $N^{-2.5+\epsilon}$, $N^{-3.5+\epsilon}$,
  $N^{-4.5+\epsilon}$ and $N^{-5+\epsilon}$, for $\epsilon > 0$
  arbitrarily small. 

\medskip

\begin{figure}[h] \label{fig:Fourier}
\centering
\psfig{figure=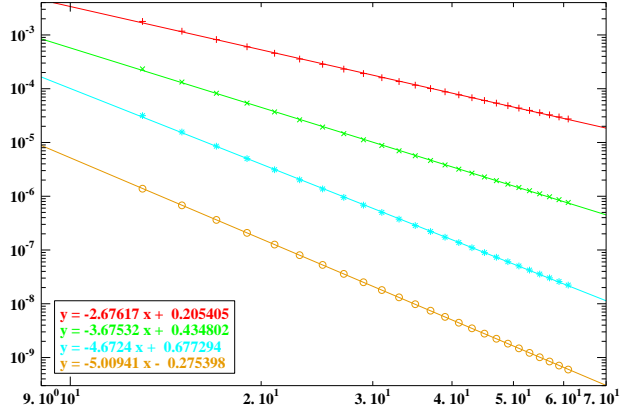,height=6truecm}
\caption{Numerical errors $\|u_N-u\|_{H^1}$ ($+$),
  $\|u_N-u\|_{L^2}$ ($\times$), $\|u_N-u\|_{H^{-1}}$ ($\ast$), and
  $|\lambda_N-\lambda|$ ($\circ$), as
functions of $2N+1$ (the dimension of $\widetilde X_N$) in log scales.}
\end{figure}

\section{Finite element discretization} 
\label{sec:FE}
In this section, we consider the problem
\begin{equation} \label{eq:F.E}
\inf \left\{ E(v), \; v \in X, \; \int_\Omega v^2=1 \right\},
\end{equation}
where
\begin{eqnarray*}
&& \Omega \textrm{ is a rectangular brick of  } \R^d,  \quad \mbox{with
  $d=1$, $2$ or $3$,} \\ 
&& X=H^1_0(\Omega), 
\nonumber \\
&& E(v) = \frac 12 \int_\Omega |\nabla v|^2 + \frac 12 \int_\Omega V v^2
+ \frac {1}{2} \int_\Omega F(v^2).
\nonumber
\end{eqnarray*}
We assume that $V \in L^2(\Omega)$ and
that the function $F$ satisfies
(\ref{eq:Hyp6})-(\ref{eq:Hyp7p}), as well as (\ref{eq:Hyp8}) for some $1
< r \le 2$ and $0 \le r+s \le 3$. Throughout this section, we denote by
$u$ the unique positive solution of (\ref{eq:F.E}) and by $\lambda$ the
corresponding Lagrange multiplier.
 
In the non periodic case considered here, a classical variational
approximation of~(\ref{eq:min_pb_u}) is provided by the finite element
method. We
consider a family of regular triangulations $({\cal T}_h)_h$ of
$\Omega$. This means, in the case when $d=3$ for instance, that for each
$h > 0$, ${\cal T}_h$ is a collection of tetrahedra such that
 \begin{itemize}
\item $\overline\Omega$ is the union of all the elements of ${\cal T}_h$;
 \item the intersection of two different elements of ${\cal T}_h$ is
   either empty, a vertex, a whole edge, or a whole face of both of them;
 \item the ratio of the diameter $h_K$ of any element $K$ of ${\cal T}_h$ to the
diameter of its inscribed sphere is smaller than a constant independent of $h$.
\end{itemize}
As usual, $h$ denotes the maximum of the diameters $h_K$, $K\in {\cal
  T}_h$. The parameter of the discretization then is $\delta=h > 0$. For
each $K$ in ${\cal T}_h$ and each nonnegative integer $k$, we denote by
$\mathbb P_k(K)$ the space of the restrictions to $K$ of the polynomials
with $d$ variables and total degree lower or equal to $k$.

The finite element space $X_{h,k}$ constructed from ${\cal T}_h$ and
$\mathbb P_k(K)$ is the space of all continuous functions on $\Omega$
vanishing on $\partial\Omega$ such that their restrictions to any
element $K$ of ${\cal T}_h$ belong to $\mathbb P_k(K)$. Recall that
$X_{h,k} \subset H^1_0(\Omega)$ as soon as $k \ge 1$.

We denote by $\pi_{h,k}^0$ and $\pi_{h,k}^1$ the orthogonal projectors
on $X_{h,k}$ for the $L^2$ and $H^1$ inner products respectively.
The following estimates are classical (see e.g. \cite{Ern}): there
exists $C \in \R_+$ such 
that for all $r \in \N$ such that $1 \le r \le k+1$,
\begin{eqnarray}
\forall \phi \in H^r(\Omega) \cap H^1_0(\Omega), & \quad &
\|\phi-\pi_{h,k}^0\phi\|_{L^2} \le C h^r \|\phi\|_{H^r}, 
\nonumber \\
\forall \phi \in H^r(\Omega) \cap H^1_0(\Omega), & \quad &
\|\phi-\pi_{h,k}^1\phi\|_{H^1} \le C h^{r-1} \|\phi\|_{H^r}. \label{eq:proj_H1}
\end{eqnarray}
Let $u_{h,k}$ be a solution to the variational problem 
$$
\inf \left\{E(v_{h,k}), \; v_{h,k} \in X_{h,k}, \; \int_\Omega v_{h,k}^2
  = 1 \right\} 
$$
such that $(u_{h,k},u)_{L^2} \ge 0$. In this setting, we obtain the
following {\it a priori} error estimates.

\begin{thm} \label{Th:FE}
 Assume that $V \in L^2(\Omega)$ and
that the function $F$ satisfies (\ref{eq:Hyp6}), (\ref{eq:Hyp7}) for
$q=1$, (\ref{eq:Hyp7p}), and 
(\ref{eq:Hyp8}) for some $1 < r \le 2$ and $0 \le r+s \le 3$.
Then there exists $h_0 > 0$ and $C \in \R_+$ such that for all $0 < h
\le h_0$,
\begin{eqnarray}
\|u_{h,1}-u\|_{H^1} & \le &  C \, h \label{eq:estim_P1_H1} \\
\|u_{h,1}-u\|_{L^2} & \le &  C \, h^2 \label{eq:estim_P1_L2} \\
|\lambda_{h,1}-\lambda| & \le & C \, h^2 \label{eq:estim_P1_lambda} \\
\frac \gamma 2 \|u_{h,1}-u\|_{H^1}^2 \le E(u_{h,1})-E(u) & \le & C \,
h^2.  \label{eq:estim_P1_energy} 
\end{eqnarray}
If in addition, $V \in H^1(\Omega)$, $F$ satisfies (\ref{eq:Hyp8}) for
$r=2$ and is such that $F \in C^3((0,+\infty),\R)$ and 
$F''(t)t^{1/2}$ and $F'''(t)t^{3/2}$ are bounded in the vicinity of $0$, 
then there exists $h_0 > 0$ and $C \in \R_+$ such that for all $0 < h
\le h_0$, 
\begin{eqnarray}
\|u_{h,2}-u\|_{H^1} & \le &  C \, h^2 \label{eq:estim_P2_H1} \\
\|u_{h,2}-u\|_{L^2} & \le &  C \, h^3  \label{eq:estim_P2_L2} \\
|\lambda_{h,2}-\lambda| & \le & C \, h^4 \label{eq:estim_P2_lambda} \\
\frac \gamma 2 \|u_{h,2}-u\|_{H^1}^2 \le E(u_{h,2})-E(u) 
& \le & C \, h^4. \label{eq:estim_P2_energy}
\end{eqnarray}
\end{thm}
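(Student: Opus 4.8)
The plan is to apply the general estimates of Theorem~\ref{Th:basic} together with the finite-element approximation properties \eqref{eq:proj_H1} and the elliptic-regularity facts for $u$ and for the adjoint state $\psi_w$. First, for the $\mathbb P_1$ case, I would note that under the stated hypotheses (in particular $q=1$, $0\le r+s\le 3$) the minimizer $u$ solves $-\Delta u + Vu + f(u^2)u=\lambda u$ with $V\in L^2(\Omega)$, so by elliptic regularity $u\in H^2(\Omega)\cap H^1_0(\Omega)$ (one uses that $f(u^2)u\in L^2$ since $u\in L^\infty$). The quasi-optimality estimate \eqref{eq:error_H1} of Theorem~\ref{Th:basic} then gives $\|u_{h,1}-u\|_{H^1}\le C\min_{v_h\in X_{h,1}}\|v_h-u\|_{H^1}\le C\|u-\pi_{h,1}^1 u\|_{H^1}\le Ch\|u\|_{H^2}$, which is \eqref{eq:estim_P1_H1}. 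The energy bound \eqref{eq:estim_P1_energy} is then immediate from \eqref{eq:error_energy_2} (valid since $F''$ is bounded near $0$ under \eqref{eq:Hyp7p}, or more precisely from \eqref{eq:error_energy}; here $6/(5-2q)=6$ with $q=1$, and $\|u_{h,1}-u\|_{L^6}\le C\|u_{h,1}-u\|_{H^1}$, so the $L^{6}$ term is controlled by $h$, but combining with \eqref{eq:estim_P1_L2} below one gets $h^2$).

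Next, for the $L^2$ estimate \eqref{eq:estim_P1_L2}, I would use \eqref{eq:error_L2_simpler} from the Remark (applicable since $0\le r+s\le 3$): $\|u_{h,1}-u\|_{L^2}^2\le C\|u_{h,1}-u\|_{H^1}\min_{\psi_h\in X_{h,1}}\|\psi_{u_{h,1}-u}-\psi_h\|_{H^1}$. One must check that the adjoint state $\psi_w$ with $w=u_{h,1}-u\in L^2$ lies in $H^2$: this follows by examining the elliptic equation it satisfies (the analogue of \eqref{eq:phi_w_Fourier}), whose right-hand side is in $L^2$ provided $V\in L^2$, $f(u^2),f'(u^2)u^2\in L^\infty$, giving $\|\psi_{w}\|_{H^2}\le C\|w\|_{L^2}=C\|u_{h,1}-u\|_{L^2}$. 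Then $\min_{\psi_h}\|\psi_{u_{h,1}-u}-\psi_h\|_{H^1}\le Ch\|\psi_{u_{h,1}-u}\|_{H^2}\le Ch\|u_{h,1}-u\|_{L^2}$, so the inequality becomes $\|u_{h,1}-u\|_{L^2}^2\le Ch\|u_{h,1}-u\|_{H^1}\|u_{h,1}-u\|_{L^2}$, i.e. $\|u_{h,1}-u\|_{L^2}\le Ch\|u_{h,1}-u\|_{H^1}\le Ch^2$. The eigenvalue bound \eqref{eq:estim_P1_lambda} then follows from \eqref{eq:error_lambda}: $|\lambda_{h,1}-\lambda|\le C(\|u_{h,1}-u\|_{H^1}^2+\|u_{h,1}-u\|_{L^6})$; here $\|u_{h,1}-u\|_{L^6}\le C\|u_{h,1}-u\|_{H^1}\le Ch$ only gives $h$, so instead I would use the sharper route of Remark~\ref{rem:negative_Sobolev} / the computation \eqref{Cal_lambda}: since $w_{u,u_{h,1}}$ is bounded in $L^2$ (indeed in $H^1$ by the gradient formula, using $u_{h,1}$ bounded in $H^1$ hence... — actually one only needs $L^2$ here, or one interpolates), one gets $|\lambda_{h,1}-\lambda|\le C(\|u_{h,1}-u\|_{H^1}^2+\|u_{h,1}-u\|_{L^2})\le Ch^2$. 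This also upgrades the $L^6$ term in \eqref{eq:error_energy} and closes \eqref{eq:estim_P1_energy}.

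For the $\mathbb P_2$ case the scheme is identical but one order higher: under the additional hypotheses ($V\in H^1$, $F\in C^3$ with the stated growth of $F''$, $F'''$, and \eqref{eq:Hyp8} with $r=2$) one has by elliptic regularity $u\in H^3(\Omega)\cap H^1_0(\Omega)$ and $\psi_w\in H^3$ with $\|\psi_w\|_{H^3}\le C\|w\|_{H^1}$; moreover the gradient formula in Remark~\ref{rem:negative_Sobolev} shows $w_{u,u_{h,2}}$ is uniformly bounded in $H^1$ (this is exactly where $F''(t)t^{1/2}$ bounded near $0$ and $u_{h,2}$ bounded in $L^\infty$ are used, the latter coming from $u_{h,2}$ bounded in $H^2$ via the elliptic equation it solves, valid since $X_{h,2}\subset H^1_0$ and $d\le 3$). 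Then \eqref{eq:error_H1} gives $\|u_{h,2}-u\|_{H^1}\le Ch^2\|u\|_{H^3}$; \eqref{eq:error_L2_simpler} with the $H^3$ adjoint regularity and \eqref{eq:proj_H1} ($r=3$) gives $\|u_{h,2}-u\|_{L^2}^2\le C\|u_{h,2}-u\|_{H^1}\cdot h^2\|\psi_{u_{h,2}-u}\|_{H^3}\le Ch^2\|u_{h,2}-u\|_{H^1}\|u_{h,2}-u\|_{H^1}$ — wait, one needs $\|\psi_w\|_{H^3}\le C\|w\|_{H^1}$ and $\|u_{h,2}-u\|_{H^1}$, giving $\|u_{h,2}-u\|_{L^2}^2\le Ch^2\|u_{h,2}-u\|_{H^1}^2\le Ch^6$, hence \eqref{eq:estim_P2_L2}; finally \eqref{Cal_lambda} with $w_{u,u_{h,2}}\in H^1$ bounded gives $|\lambda_{h,2}-\lambda|\le C(\|u_{h,2}-u\|_{H^1}^2+\|u_{h,2}-u\|_{H^{-1}})$, and bounding $\|u_{h,2}-u\|_{H^{-1}}$ by an Aubin--Nitsche duality argument at the $H^3$-regularity level (pairing against $w\in H^1$, using the adjoint problem and \eqref{eq:new37}-type estimates as in Section~\ref{sec:Fourier}) yields $\|u_{h,2}-u\|_{H^{-1}}\le Ch^4$, hence \eqref{eq:estim_P2_lambda}; \eqref{eq:estim_P2_energy} follows from \eqref{eq:error_energy_2} and \eqref{eq:estim_P2_H1}.

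The main obstacle I anticipate is twofold. First, establishing the $L^\infty$ and higher Sobolev bounds on $u_{h,k}$ itself (needed to linearize $w_{u,u_{h,k}}$ and to place the nonlinear terms in the right $L^p$ or $H^s$ space): unlike in the Fourier case, $u_{h,k}$ does not obviously solve a clean elliptic PDE, so one must argue via \eqref{eq:14}, the coercivity of $A_{u_{h,k}}-\lambda_{h,k}$, the already-established $H^1$-convergence, and an inverse-inequality or discrete-regularity bootstrap — this is the delicate point. Second, the $H^{-1}$ (negative-norm) estimate for the $\mathbb P_2$ eigenvalue bound requires a careful duality argument: one pairs $u_{h,2}-u$ against an arbitrary $w\in H^1(\Omega)$, introduces the adjoint state $\psi_w$, uses Galerkin orthogonality (the identity \eqref{eq:new36} restricted to $X_{h,2}\cap u^\perp$, with the correction \eqref{eq:minuperp}) together with the $H^1$-approximation of $\psi_w$ at rate $h^2$ and the already-proven $O(h^2)$ $H^1$-rate and $O(h^3)$ $L^2$-rate for $u_{h,2}-u$, and must check that all the residual nonlinear remainder terms (controlled via \eqref{eq:Hyp8} with $r=2$, $s\le 3$) are $O(h^4)$ — this bookkeeping, though routine in spirit, is where the hypotheses on $F'''$ and $V\in H^1$ are genuinely consumed.
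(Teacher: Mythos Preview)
Your overall strategy matches the paper's proof closely: apply Theorem~\ref{Th:basic} together with elliptic regularity for $u$ and for the adjoint state $\psi_w$, use the finite-element approximation property~\eqref{eq:proj_H1}, and for the $\mathbb P_2$ eigenvalue bound pass through the $H^{-1}$ duality argument. Three points deserve correction or completion.

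\textbf{Arithmetic in the $\mathbb P_1$ case.} With $q=1$ one has $6/(5-2q)=2$, not $6$. Consequently \eqref{eq:error_lambda} and \eqref{eq:error_energy} directly give
\[
|\lambda_{h,1}-\lambda|\le C\bigl(\|u_{h,1}-u\|_{H^1}^2+\|u_{h,1}-u\|_{L^2}\bigr)\le Ch^2,
\qquad
E(u_{h,1})-E(u)\le C\bigl(\|u_{h,1}-u\|_{H^1}^2+\|u_{h,1}-u\|_{L^2}\bigr)\le Ch^2,
\]
without any recourse to \eqref{Cal_lambda} or to $L^2$-bounds on $w_{u,u_{h,1}}$. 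Your detour is valid but unnecessary; this is exactly why the paper simply says the $\mathbb P_1$ estimates ``are direct consequences of Theorem~\ref{Th:basic}''.

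\textbf{$L^\infty$-control of $u_{h,2}$.} Your first suggestion---that $u_{h,2}$ is bounded in $H^2$ because it solves an elliptic equation---does not work: $u_{h,2}$ only satisfies the Galerkin relation~\eqref{eq:14}, not a PDE, so there is no $H^2$-regularity to invoke. The paper resolves your stated obstacle exactly via the inverse-inequality route you mention second: write
\[
\|u_{h,2}-u\|_{L^\infty}\le \|u_{h,2}-{\cal I}_{h,2}u\|_{L^\infty}+\|{\cal I}_{h,2}u-u\|_{L^\infty},
\]
use $u\in H^3\hookrightarrow C^1$ for the second term, and for the first term the inverse inequality $\|v_h\|_{L^\infty}\le C\rho(h)\|v_h\|_{H^1}$ (with $\rho(h)=h^{-1/2}$ when $d=3$) together with $\|u_{h,2}-{\cal I}_{h,2}u\|_{H^1}\le \|u_{h,2}-u\|_{H^1}+\|u-{\cal I}_{h,2}u\|_{H^1}\le Ch^2$, giving $\rho(h)h^2\to 0$. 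This $L^\infty$-convergence is what makes $w_{u,u_{h,2}}$ uniformly bounded in $H^1$ (via the gradient formula of Remark~\ref{rem:negative_Sobolev}, where the hypotheses on $F''(t)t^{1/2}$ and (\ref{eq:Hyp8}) with $r=2$ enter).

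\textbf{The $\mathbb P_2$ energy estimate.} You cannot invoke \eqref{eq:error_energy_2}, because that estimate assumes $F''$ is bounded near $0$, whereas the $\mathbb P_2$ hypotheses only assume $F''(t)t^{1/2}$ bounded near $0$ (which allows $F''(t)\sim t^{-1/2}$). The paper instead returns to the identity
\[
E(u_{h,2})-E(u)=\tfrac12\langle(A_u-\lambda)(u_{h,2}-u),(u_{h,2}-u)\rangle
+\tfrac12\int_\Omega\bigl(F(u_{h,2}^2)-F(u^2)-f(u^2)(u_{h,2}^2-u^2)\bigr),
\]
and Taylor-expands the integrand; boundedness of $F''(t)t^{1/2}$ near $0$ together with the $L^\infty$-convergence of $u_{h,2}$ suffices to bound the remainder by $C\|u_{h,2}-u\|_{H^1}^2$.

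With these three fixes your argument is complete and coincides with the paper's. As a minor remark, for the $\mathbb P_2$ $L^2$-estimate the paper uses the simpler $H^2$-bound $\|\psi_{u_{h,2}-u}\|_{H^2}\le C\|u_{h,2}-u\|_{L^2}$ (yielding $\|u_{h,2}-u\|_{L^2}\le Ch\|u_{h,2}-u\|_{H^1}$) rather than your $H^3$-bound; both give $Ch^3$.
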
 

\begin{proof}
As $\Omega$ is a rectangular brick, $V$ satisfies (\ref{eq:Hyp3}) and
$F$ satisfies (\ref{eq:Hyp6})-(\ref{eq:Hyp7p}), we
have $u \in H^2(\Omega)$. We then use the fact
that $\psi_{u_{h,k}-u}$ is solution to 
\begin{eqnarray}
& & - \Delta \psi_{u_{h,k}-u} +
(V+f(u^2)+2f'(u^2)u^2-\lambda)\psi_{u_{h,k}-u} \nonumber  
\\ && \qquad \qquad =
2 \left(\int_\Omega f'(u^2)u^3\psi_{u_{h,k}-u}\right) u + (u_{h,k}-u) -
(u_{h,k}-u,u)_{L^2} u, \qquad\qquad
\nonumber
\end{eqnarray}
to establish that $\psi_{u_{h,k}-u} \in H^2(\Omega) \cap H^1_0(\Omega)$ and that
\begin{equation} \label{eq:bound_psi_hk}
\| \psi_{u_{h,k}-u} \|_{H^2} \le C \| u_{h,k}-u \|_{L^2}
\end{equation}
for some constant $C$ independent of $h$ and $k$.
The estimates (\ref{eq:estim_P1_H1})-(\ref{eq:estim_P1_energy}) then are
directly consequences of Theorem~\ref{Th:basic},
(\ref{eq:error_L2_simpler}), (\ref{eq:proj_H1}) and
(\ref{eq:bound_psi_hk}). 

Under the additional assumptions that $V \in H^1(\Omega)$, we obtain by
standard elliptic 
regularity arguments that $u \in H^3(\Omega)$. The $H^1$ and $L^2$
estimates (\ref{eq:estim_P2_H1}) and (\ref{eq:estim_P2_L2}) immediately
follows from Theorem~\ref{Th:basic}, (\ref{eq:error_L2_simpler}),
(\ref{eq:proj_H1}) and (\ref{eq:bound_psi_hk}). We also have
$$
|\lambda_{2,h}-\lambda| \le C h^3
$$
for a constant $C$ independent of $h$.
In order to prove (\ref{eq:estim_P2_lambda}), we proceed as in
Section~\ref{sec:Fourier}. We start from the equality
$$
\lambda_{2,h}-\lambda = 
 \langle (A_u-\lambda)(u_{2,h}-u ),(u_{2,h}-u ) \rangle_{X',X}
+   \int_\Omega \widetilde w^h (u_{2,h}-u )
$$
where
$$
\widetilde w^h = u_{2,h}^2 \frac{f(u_{2,h}^2)-f(u^2)}{u_{2,h}-u}.
$$
We now claim that $u_{h,2}$ converges to $u$ in
$L^\infty(\Omega)$ when $h$ goes to zero. To establish this result, we first
remark that 
$$
\|u_{h,2}-u\|_{L^\infty} \le 
\|u_{h,2}- {\cal I}_{h,2} u \|_{L^\infty} + \|{\cal I}_{h,2} u-u\|_{L^\infty},
$$ 
where ${\cal I}_{h,2}$ is the interpolation projector on $X_{h,2}$. As $u \in
H^3(\Omega) \hookrightarrow C^1(\overline{\Omega})$, we have
$$
\lim_{h \to 0^+} \|{\cal I}_{h,2} u-u\|_{L^\infty} = 0.
$$
On the other hand, using the inverse inequality 
$$
\exists C \in \R_+ \mbox{ s.t. } \forall 0 < h \le h_0, \; \forall v_h
\in X_{h,2}, \quad \|v_{h,2}\|_{L^\infty} \le C \rho(h) \|v_{h,2}\|_{H^1}, 
$$
with $\rho(h)=1$ if $d=1$, $\rho(h)=1+\ln h$ if $d=2$ and
$\rho(h)=h^{-1/2}$ if $d=3$ (see \cite{Ern} for instance), we obtain
\begin{eqnarray*}
\|u_{h,2}- {\cal I}_{h,2} u\|_{L^\infty} & \le & C \rho(h) 
\|u_{h,2}- {\cal I}_{h,2} u \|_{H^1}  \\
& \le &  C \rho(h) \left(  \|u_{h,2}-u \|_{H^1} + \|u-{\cal I}_{h,2} u\|_{H^1}
\right) \\
& \le & C' \, \rho(h) \, h^2 \; \mathop{\longrightarrow}_{h \to 0^+} \; 0.
\end{eqnarray*}
Hence the announced result. This implies in particular that $\widetilde
w^h$ is bounded in $H^1(\Omega)$, uniformly in $h$. 
Consequently, there exists $C \in \R_+$ such that for all $0 < h \le
h_0$, 
\begin{equation} \label{eq:estim_lambda_FE}
|\lambda_{h,2}-\lambda| \le C \left( \|u_{h,2}-u\|_{H^1}^2
+ \|u_{h,2}-u\|_{H^{-1}} \right).
\end{equation}
To estimate the $H^{-1}$-norm of $u_{h,2}-u$, we write that for all $w
\in H^1_0(\Omega)$, 
\begin{eqnarray}
\!\!\!\!\!\!\!\!\!\!\!\! \int_\Omega w (u_{h,2}-u)  & = &
\langle
(E''(u)-\lambda)(u_{h,2}-u), \pi^1_{X_{h,2}\cap u^\perp} \psi_w \rangle_{X',X}
 \nonumber \\
& & + \langle
(E''(u)-\lambda)(u_{h,2}-u), \psi_w - \pi^1_{X_{h,2}\cap u^\perp} 
 \psi_w \rangle_{X',X}  \nonumber \\ 
& & + \|u_{h,2}-u\|_{L^2}^2 \int_\Omega f'(u^2) u^3  \psi_w 
- \frac 12 \|u_{h,2}-u \|_{L^2}^2 \int_\Omega u  w,
\nonumber
\end{eqnarray}
where $\psi_w$ is solution to 
\begin{eqnarray}
&& - \Delta  \psi_w +
(V+f(u^2)+2f'(u^2)u^2-\lambda)  \psi_w  \nonumber \\ && 
\qquad \qquad =
2 \left(\int_\Omega f'(u^2)u^3 \psi_w\right) u +  w -
( w,u)_{L^2} u,\qquad \qquad
\label{eq:eq_sur_psit}
\end{eqnarray}
and where $\pi^1_{X_{h,2}\cap u^\perp}$ denotes the orthogonal projector
on $X_{h,2}\cap u^\perp$ for the $H^1$ inner product.
Using the assumptions that $V \in H^1(\Omega)$, $F \in
C^3((0,+\infty),\R)$, and $F''(t)t^{1/2}$ and $F'''(t)t^{3/2}$ are
bounded in the vicinity of $0$, we deduce from
(\ref{eq:eq_sur_psit}) that $\psi_w$ is in $H^3(\Omega)$ and
that there exists $C \in \R_+$ such that for all $w \in H^1_0(\Omega)$
and all $0 < h \le h_0$, 
$$
\| \psi_w\|_{H^3} \le C \| w\|_{H^1}.
$$
We therefore obtain the inequality 
\begin{equation} \label{eq:bound_psih}
\|  \psi_w-\pi^1_{h,2}  \psi_w\|_{H^1} \le C 
h^2 \|w\|_{H^1}, 
\end{equation}
where the constant $C$ is independent of $h$. 
 
Putting together (\ref{eq:Hyp8}) (for $r=2$), (\ref{eq:Euu-lambda}),
(\ref{eq:new37}), (\ref{eq:minuperp}),  
(\ref{eq:proj_H1}), (\ref{eq:estim_P2_H1}), (\ref{eq:estim_P2_L2}) and
(\ref{eq:bound_psih}), we get 
$$
\|u_{h,2}-u\|_{H^{-1}} = \sup_{w \in H^1_0(\Omega) \setminus
  \left\{0\right\}} \frac{\int_\Omega w(u_{h,2}-u)}{\|w\|_{H^1}} \le C
\, h^4.
$$
Combining with (\ref{eq:estim_P2_H1}) and (\ref{eq:estim_lambda_FE}), we
end up with (\ref{eq:estim_P2_lambda}). Lastly, we deduce
(\ref{eq:estim_P2_energy}) from the equality
\begin{eqnarray*}
E(u_{h,2}) - E(u) & = &\frac 12 \langle (A_u-\lambda) (u_{h,2}-u)
,(u_{h,2}-u)  \rangle_{X',X}  \\ 
& & + \frac 12 \int_\Omega F\left(  u^2 + (u_{h,2} ^2-u^2) \right)
- F\left( u^2 \right) - f\left(  u^2 \right) (u_{h,2} ^2-u^2),
\end{eqnarray*}
Taylor expanding the integrand and exploiting the boundedness of the
function $F''(t)t^{1/2}$ in the vicinity of $0$.   
\end{proof}

Numerical results for the case when $\Omega = (0,\pi)^2$, $V(x_1,x_2) =
x_1^2+x_2^2$ and $F(t^2)=t^2/2$ are reported on Figure~2. The agreement
with the error estimates obtained in Theorem~\ref{Th:FE} is good for the 
$\mathbb P_1$ approximation and excellent for the $\mathbb P_2$
approximation.  

\begin{figure}[h] \label{fig:FE}
\centering
\begin{tabular}{c}
\psfig{figure=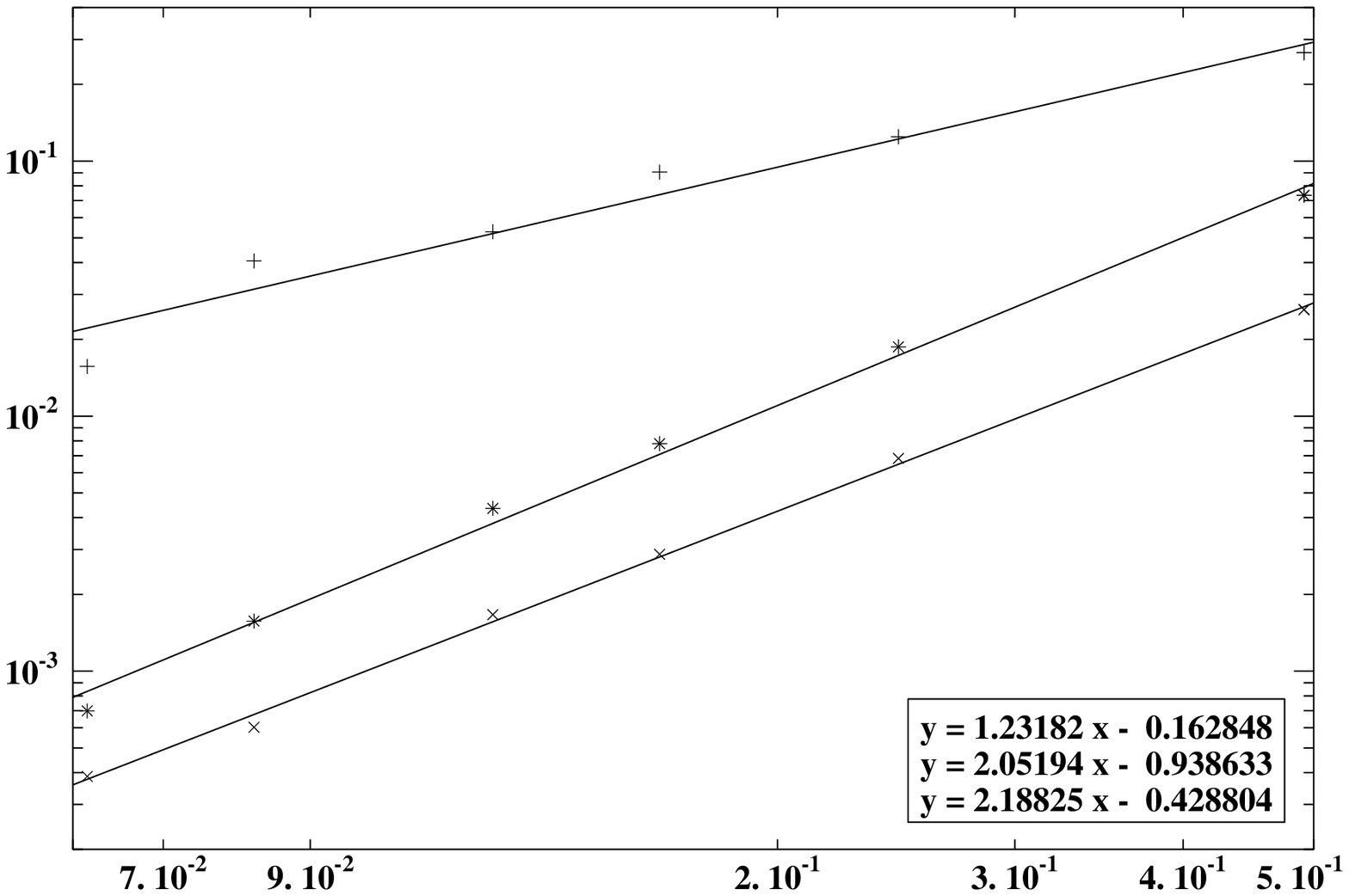,height=7truecm} \\
\psfig{figure=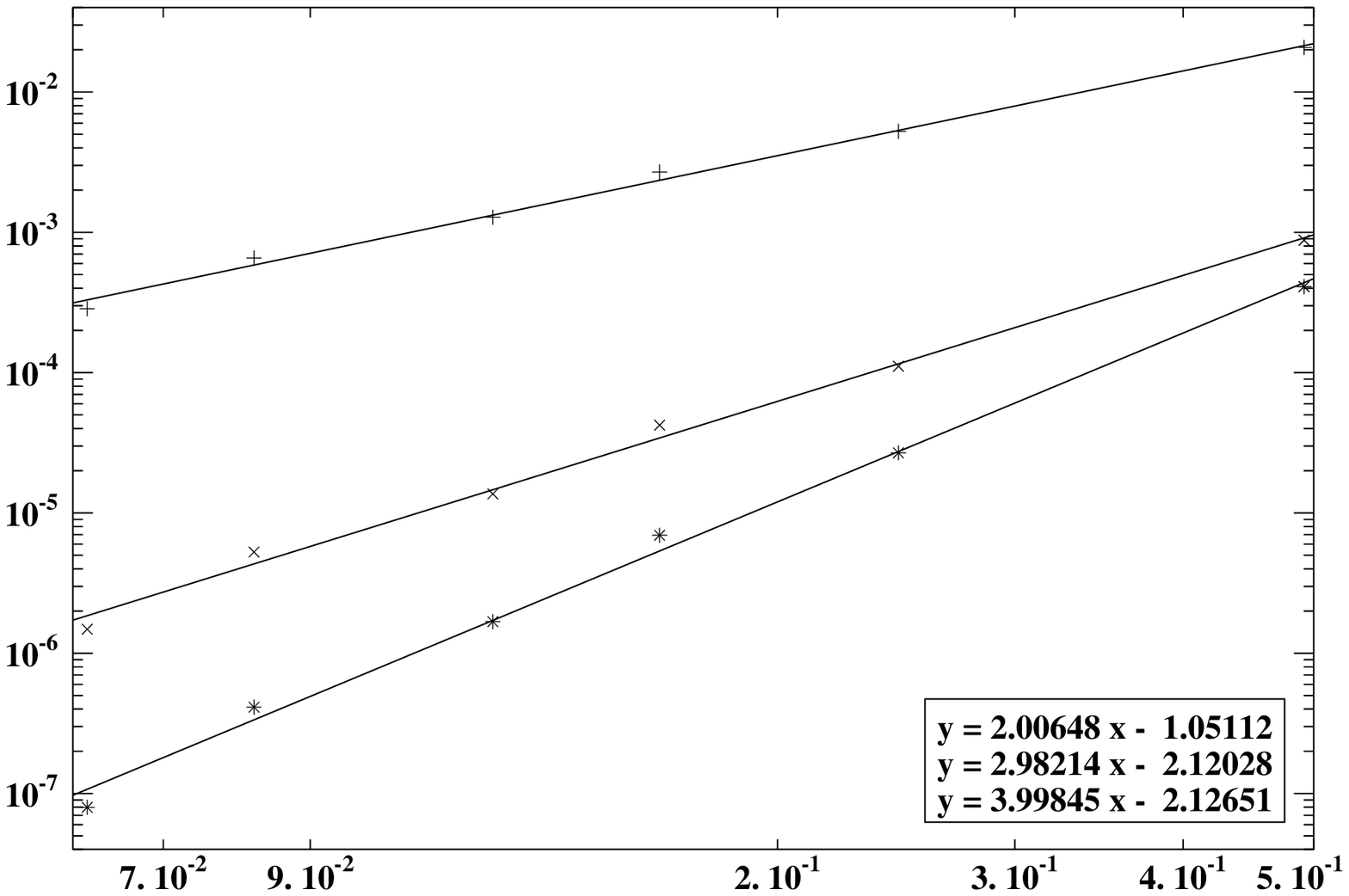,height=7truecm} 
\end{tabular}
\caption{Errors $\|u_{h,k}-u\|_{H^1}$ ($+$), $\|u_{h,k}-u\|_{L^2}$
  ($\times$) and
  $|\lambda_{h,k}-\lambda|$ ($\ast$) for the $\mathbb P_1$ ($k=1$, top) and
  $\mathbb P_2$ ($k=2$, bottom) approximations as a function of $h$ in
  log scales.}
\end{figure}

\section{The effect of numerical integration}
\label{sec:integration}

Let us now address one further consideration that is related to the
practical implementation of the method, and more precisely to the
numerical integration of the nonlinear term. For simplicity, we focus on
the case when $A = 1$.

From a practical viewpoint, the solution $(u_\delta,\lambda_\delta)$ to the
nonlinear eigenvalue problem (\ref{eq:14}) can be computed iteratively, using
for instance the optimal damping algorithm~\cite{ODA1,ODA2,GP}. At the
$p^{\rm th}$ iteration ($p \ge 1$), the ground state
$(u_\delta^p,\lambda_\delta^p) \in  X_\delta \times \R$ of some {\it
  linear}, finite dimensional, eigenvalue problem of the form   
\begin{equation}
\label{eq:6SCF}
\int_\Omega \overline{\nabla u^p_\delta} \cdot \nabla v_\delta + \int_\Omega 
\left(V+f(\widetilde\rho_\delta^{p-1})\right) \, \overline{u_\delta^{p}}
\, v_\delta = 
  \lambda_\delta^{p}\int_\Omega 
  \overline{u_\delta^{p}} v_\delta,\quad \forall v_\delta \in X_\delta, 
\end{equation}
has to be computed. In the optimal damping algorithm, the density
$\widetilde\rho_\delta^{p-1}$ is a convex linear combination of the
densites $\rho_\delta^q = |u^q_\delta|^2$, for $0 \le q \le p-1$. 
Solving (\ref{eq:6SCF}) amounts to finding the lowest eigenelement of
the matrix $H^p$ with entries 
\begin{equation}
\label{eq:6SCFmat}
H^p_{kl} := \int_\Omega \overline{\nabla \phi_k} \cdot \nabla \phi_l +
\int_\Omega  
V \, \overline{\phi_k} \,  \phi_l 
+ \int_\Omega f(\widetilde\rho_\delta^{p-1}) \, \overline{\phi_k} \, \phi_l,
\end{equation}
where $(\phi_k)_{1 \le k \le \mbox{dim}(X_\delta)}$ stands for the
canonical basis of $X_\delta$. 

In order to evaluate the last two terms of the right-hand side
of (\ref{eq:6SCFmat}), numerical integration has to be resorted to. In
the finite element approximation of (\ref{eq:F.E}), it is generally made
use of a numerical quadrature formula over each triangle (2D) or
tetrahedron (3D) based on Gauss points. In the Fourier approximation of
the periodic problem (\ref{eq:Fourier}), the terms 
$$
\int_\Omega  V \, \overline{e_k} \, e_l \quad \mbox{and} \quad 
 \int_\Omega  f(\widetilde\rho_\delta^{p-1}) \, \overline{e_k} \, e_l,
$$
which are in fact, up to a multiplicative constant, the $(k-l)^{\rm th}$
Fourier coefficients of $V$ and  
$f(\widetilde\rho_\delta^{p-1})$ respectively, are evaluated by Fast
Fourier Transform (FFT), using an integration grid which may be
different from the natural discretization grid
$$
\left\{\left(\frac{2\pi}{2N+1}j_1, \cdots, \frac{2\pi}{2N+1}j_d,
 \right), \; 0 \le j_1,\cdots,j_d \le 2N \right\}
$$ 
associated with $\widetilde X_N$. This raises the question of
the influence of the numerical 
integration on the convergence results obtained in
Theorems~~\ref{Th:basic}, \ref{Th:Fourier} and~\ref{Th:FE}.

\begin{remark} \label{rem:doublegrid}
In the case of the periodic problem considered in
Section~\ref{sec:Fourier} and when $F(t) = ct^2$ for some $c > 0$, the
last term of the right-hand side 
of (\ref{eq:6SCFmat}) can be computed exactly (up to round-off errors)
by means of a Fast Fourier Transform (FFT) on an integration grid twice
as fine as the 
discretization grid. This is due to the fact that the function 
$\widetilde\rho_\delta^{p-1} \, \overline{e_k} \, e_l$ belongs to the
space $\mbox{Span} \{e_n \, | \, |n|_* \le 4N \}$.
An analogous property is used in the evaluation of the Coulomb term in the
numerical simulation of the Kohn-Sham equations for periodic systems.
\end{remark} 

In the sequel, we focus on the simple case when $d=1$,
$\Omega=(0,2\pi)$, $X = H^1_\#(0,2\pi)$, and
$$
E(v) = \frac 12 \int_0^{2\pi} |v'|^2 + \frac 12 \int_0^{2\pi} V v^2 +
\frac 14 \int_0^{2\pi} |v|^4
$$
with $V \in H^\sigma_\#(0,2\pi)$ for some $\sigma > 1/2$. More difficult
cases will be addressed elsewhere~\cite{CCM2}.  

In view of Remark~\ref{rem:doublegrid}, we consider an integration grid 
$$
\frac{2\pi}{N_g}\Z \cap [0,2\pi) =
\left\{0,\frac{2\pi}{N_g},\frac{4\pi}{N_g}, 
    \cdots, \frac{2\pi(N_g-1)}{N_g} \right\},
$$
with $N_g \ge 4N+1$ for which we have
$$
\forall v_N \in \widetilde X_N, \quad \int_0^{2\pi} |v_N|^4 = \frac{2\pi}{N_g}
\sum_{r \in \frac{2\pi}{N_g}\Z \cap [0,2\pi)} |v_N(r)|^4,
$$
and for all $\rho \in \widetilde X_{2N}$,
\begin{equation} \label{eq:intu4}
\forall |k|,|l|\le N, \quad
\int_0^{2\pi} \rho \, \overline{e_k} \, e_l = 
 \frac{1}{N_g}
\sum_{r \in \frac{2\pi}{N_g}\Z \cap [0,2\pi)}  \rho(r)
e^{-i(k-l)r} = \widehat{\rho^{{\rm FFT}}_{k-l}},
\end{equation}
where $\widehat{\rho^{{\rm FFT}}_{k-l}}$ is the $(k-l)^{\rm th}$
coefficient of the discrete Fourier transform of $\rho$. Recall that
if $\phi = \sum_{g \in \Z} \widehat \phi_g \, e_g \in C^0_\#(0,2\pi)$,
the discrete Fourier transform of $\phi$ is the $N_g\Z$-periodic
sequence $(\widehat{\phi^{{\rm FFT}}_{g}})_{g \in \Z}$ defined by 
$$
\forall g \in \Z, \quad \widehat{\phi^{{\rm FFT}}_{g}}
= \frac{1}{N_g} \sum_{r \in \frac{2\pi}{N_g}\Z \cap [0,2\pi)}  \phi(r)
e^{-igr}.
$$
We now introduce the subspaces $W_M$ for $M \in \N^\ast$ such that
$W_{M} = \widetilde X_{(M-1)/2}$ if $M$ is odd and $W_{M} = \widetilde X_{M/2-1}
\oplus \C (e_{M/2}+e_{-M/2})$ is $M$ is even (note that
$\mbox{dim}(W_{M})=M$ for all $M \in \N^\ast$). It is then possible to define
an interpolation projector ${\cal I}_{N_g}$ from $C^0_\#(0,2\pi)$ onto
$W_{N_g}$ by 
$$
\forall x \in \frac{2\pi}{N_g}\Z \cap [0,2\pi), \quad [{\cal
  I}_{N_g}(\phi)](x) = \phi(x).
$$
The expansion of ${\cal I}_{N_g}(\phi)$ in the canonical basis of
$W_{N_g}$ is given by
$$
{\cal I}_{N_g}(\phi) \; = \; \left| \begin{array}{lll}
(2\pi)^{1/2} \dps \sum_{|g| \le (N_g-1)/2} \widehat{\phi^{{\rm
      FFT}}_{g}} \, e_g & \quad & (N_g \mbox{ odd}), \\
(2\pi)^{1/2} \dps \sum_{|g| \le N_g/2-1} \widehat{\phi^{{\rm
      FFT}}_{g}} \, e_g +  (2\pi)^{1/2} \widehat{\phi^{{\rm
      FFT}}_{N_g/2}} \, \left( \frac{e_{N_g/2}+e_{-N_g/2}}{2} \right) &
\quad & (N_g \mbox{ even}). 
\end{array} \right.
$$
Under the condition that $N_g \ge 4N+1$, the following property holds:
for all $\phi \in C^0_\#(0,2\pi)$,  
$$
\forall |k|,|l| \le N, \quad \int_0^{2\pi} {\cal I}_{N_g}(\phi) \,
\overline{e_k} \, e_l  = \widehat{\phi^{{\rm FFT}}_{k-l}}.
$$
It is therefore possible, in the particular case considered here, to
efficiently evaluate the entries of the matrix $H^p$ using the formula
\begin{eqnarray}
H^p_{kl} & := & \int_0^{2\pi} \overline{e_k'} \cdot e_l' +
\int_0^{2\pi} V \, \overline{e_k} \,  e_l 
+ \int_0^{2\pi} \widetilde\rho_N^{p-1}  \overline{e_k} \, e_l \nonumber \\
& \simeq & |k|^2 \delta_{kl} + \widehat{V^{{\rm FFT}}_{k-l}} + 
\widehat{[\widetilde\rho_{N}^{p-1}]^{\rm FFT}_{k-l}}, \label{eq:approx_FFT}
\end{eqnarray}
and resorting to Fast Fourier Transform (FFT) algorithms to compute the
discrete Fourier transforms.
Note that only the second term is computed approximatively. The
third term is computed exactly since, at each iteration,
$\widetilde\rho_N^{p-1}$ belongs to $\widetilde X_{2N}$ (see
Eq.~(\ref{eq:intu4})). Of course, this situation is specific to the
nonlinearity $F(t)=t^2/2$ considered here.

Using the approximation formula (\ref{eq:approx_FFT}) amounts to replace
the original problem 
\begin{equation} \label{eq:uN}
\inf \left\{E(v_N), \; v_N \in \widetilde X_N, \; \int_0^{2\pi} |v_N|^2 = 1 \right\},
\end{equation}
with the approximate problem
\begin{equation} \label{eq:uNNg}
\inf \left\{E_{N_g}(v_N), \; v_N \in \widetilde X_N , \; \int_0^{2\pi} |v_N|^2 = 1 \right\},
\end{equation}
where
$$
E_{N_g}(v_N) = \frac 12 \int_0^{2\pi} |v_N'|^2 + \frac 12 \int_0^{2\pi}
{\cal I}_{N_g}(V) v_N^2 + \frac 14 \int_0^{2\pi} |v_N|^4.
$$
Let us denote by $u_N$ a solution of (\ref{eq:uN})
such that $(u_N,u)_{L^2} \ge 0$ and by $u_{N,N_g}$ a solution to
(\ref{eq:uNNg}) such that $(u_{N,N_g},u)_{L^2} \ge 0$. It is easy to
check that $u_{N,N_g}$ is bounded in $H^1_\#(0,2\pi)$ uniformly in $N$
and $N_g$.

Besides, we know from Theorem~\ref{Th:Fourier} that $(u_N)_{N \in \N}$
converges to $u$ in $H^1_\#(0,2\pi)$, hence in $L^\infty_\#(2,\pi)$,
when $N$ goes to infinity. This implies that the sequence
$(A_u-A_{u_N})_{N \in \N}$ converges to $0$ in operator
norm. Consequently, for all $N$ large enough and all $N_g$ such that
$N_g \ge 4N+1$,
\begin{eqnarray*}
\frac{\gamma}4 \|u_{N,N_g}-u_N\|_{H^1}^2 & \le & E(u_{N,N_g})-E(u_N) \\
& \le &  E_{N_g}(u_{N,N_g})-E_{N_g}(u_N) \\ & & + \int_0^{2\pi} 
(V-{\cal I}_{N_g}(V)) \left(|u_{N,N_g}|^2-|u_N|^2\right) \\
& \le & \int_0^{2\pi} 
(V-{\cal I}_{N_g}(V)) \left(|u_{N,N_g}|^2-|u_N|^2\right) \\
& \le & C \|\Pi_{2N}(V-{\cal I}_{N_g}(V))\|_{L^2} \|u_{N,N_g}-u_N\|_{H^1},
\end{eqnarray*} 
where we have used the fact that 
$\left(|u_{N,N_g}|^2-|u_N|^2\right) \in \widetilde X_{2N}$. Therefore, 
\begin{equation} \label{eq:uNNgVIV}
\|u_{N,N_g}-u_N\|_{H^1} \le C \|\Pi_{2N}(V-{\cal I}_{N_g}(V))\|_{L^2},
\end{equation}
for a constant $C$ independent of $N$ and $N_g$. 
Likewise,
\begin{eqnarray*}
\lambda_{N,N_g}-\lambda_N & = & \langle (A_{u_N}-\lambda_N)
(u_{N,N_g}-u_N), (u_{N,N_g}-u_N) \rangle_{X',X} \\ & & + \int_0^{2\pi} 
(V-{\cal I}_N(V)) |u_{N,N_g}|^2 \\
& & + \int_0^{2\pi} |u_{N,N_g}|^2 (u_{N,N_g}+u_N)(u_{N,N_g}-u_N),
\end{eqnarray*}
from which we deduce, using (\ref{eq:uNNgVIV}),
$$
|\lambda_{N,N_g}-\lambda_N| \le C \|\Pi_{2N}(V-{\cal I}_{N_g}(V))\|_{L^2}. 
$$
An error analysis of
the interpolation operator ${\cal I}_{N_g}$ is given in~\cite{CHQZ}: for
all non-negative real numbers $0 \le r \le s$ with $s > 1/2$ (for
$d=1$),
$$
\|\varphi - {\cal I}_{N_g}(\varphi)\|_{H^r} \le \frac{C}{N_g^{s-r}}
\|\varphi\|_{H^s},\quad \forall\varphi\in H^s_\#(0,2\pi). 
$$
Thus,
\begin{equation} \label{eq:estimeIV}
\|\Pi_{2N}(V-{\cal I}_{N_g}(V))\|_{L^2} \le 
\|V - {\cal I}_{N_g}(V)\|_{L^2} \le \frac{C}{N_g^{\sigma}},
\end{equation}
and the above inequality provides the following estimates:
\begin{eqnarray} 
\|u_{N,N_g}-u\|_{H^1} & \le & C \left(N^{-\sigma-1}+N_g^{-\sigma}\right)
\label{eq:erroruNNgL2} \\
\|u_{N,N_g}-u\|_{L^2} & \le & C \left(N^{-\sigma-2}+N_g^{-\sigma}\right)
\label{eq:NNN} \\
|\lambda_{N,N_g}-\lambda| & \le & C
\left(N^{-2\sigma-2}+N_g^{-\sigma}\right), \label{eq:errorlambdaNNg} 
\end{eqnarray}
for a constant $C$ independent of $N$ and $N_g$. The first component of
the error bound (\ref{eq:erroruNNgL2}) corresponds to the error
$\|u_N-u\|_{H^1}$ while the second component corresponds to the
numerical integration error $\|u_{N,N_g}-u_N\|_{H^1}$ (the same remark
applies to the error bounds (\ref{eq:NNN}) and (\ref{eq:errorlambdaNNg})).

\medskip

It is classical that for the norm $\|\varphi - {\cal
  I}_{N_g}\varphi\|_{H^{r}}$ for $r < 0$ is in general of the same order
of magnitude as $\|\varphi - {\cal I}_{N_g}\varphi\|_{L^2}$. As the
existence of better estimates in negative norms is a
corner stone in the derivation of the improvement of the error estimate
(\ref{eq:error_lambda_F}) for the eigenvalues (doubling of the
convergence rate), we expect that the eigenvalue approximation will be
dramatically polluted by the use of the numerical integration
formula. 

This can be checked numerically. Considering again the
one-dimensional example used in Section~\ref{sec:Fourier}
($\Omega=(0,2\pi)$, $V(x) = \sin(|x-\pi|/2)$, $F(t)=t^2/2$), we
have computed for $4 \le N \le 30$ and $N_g=2^p$ with $7 \le p \le 15$,
the errors $\|u_{N,N_g}-u\|_{H^1}$, $\|u_{N,N_g}-u\|_{L^2}$,
$\|u_{N,N_g}-u\|_{H^{-1}}$, and $|\lambda_{N,N_g}-\lambda|$. 
On Figure~3, these quantities are plotted as functions
of $2N+1$ (the dimension of $\widetilde X_N$), for various values of
$N_g$. 

The non-monotonicity of the curve $N \mapsto
  |\lambda_{N,N_g}-\lambda|$  originates from the fact that
  $\lambda_{N,N_g}-\lambda$ can be positive or negative depending on the
  values of $N$ and $N_g$. 

\medskip

\begin{figure}[h] \label{fig:FourierNg}
$\!\!\!\!\!\!\!\!\!\!\!\!\!\!\!\!\!\!\!$
\begin{tabular}{cc}
\psfig{figure=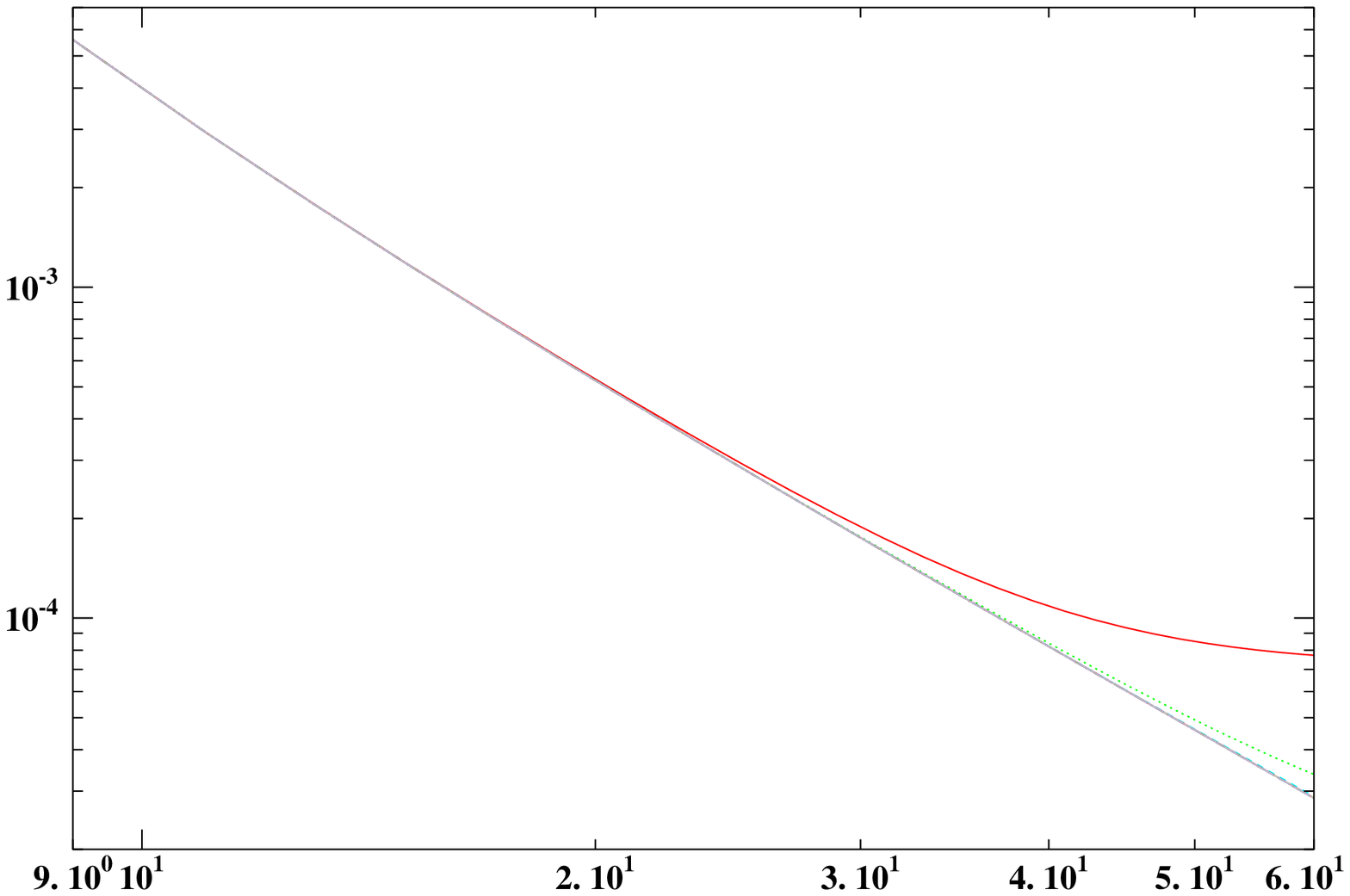,height=4.5truecm} &
\psfig{figure=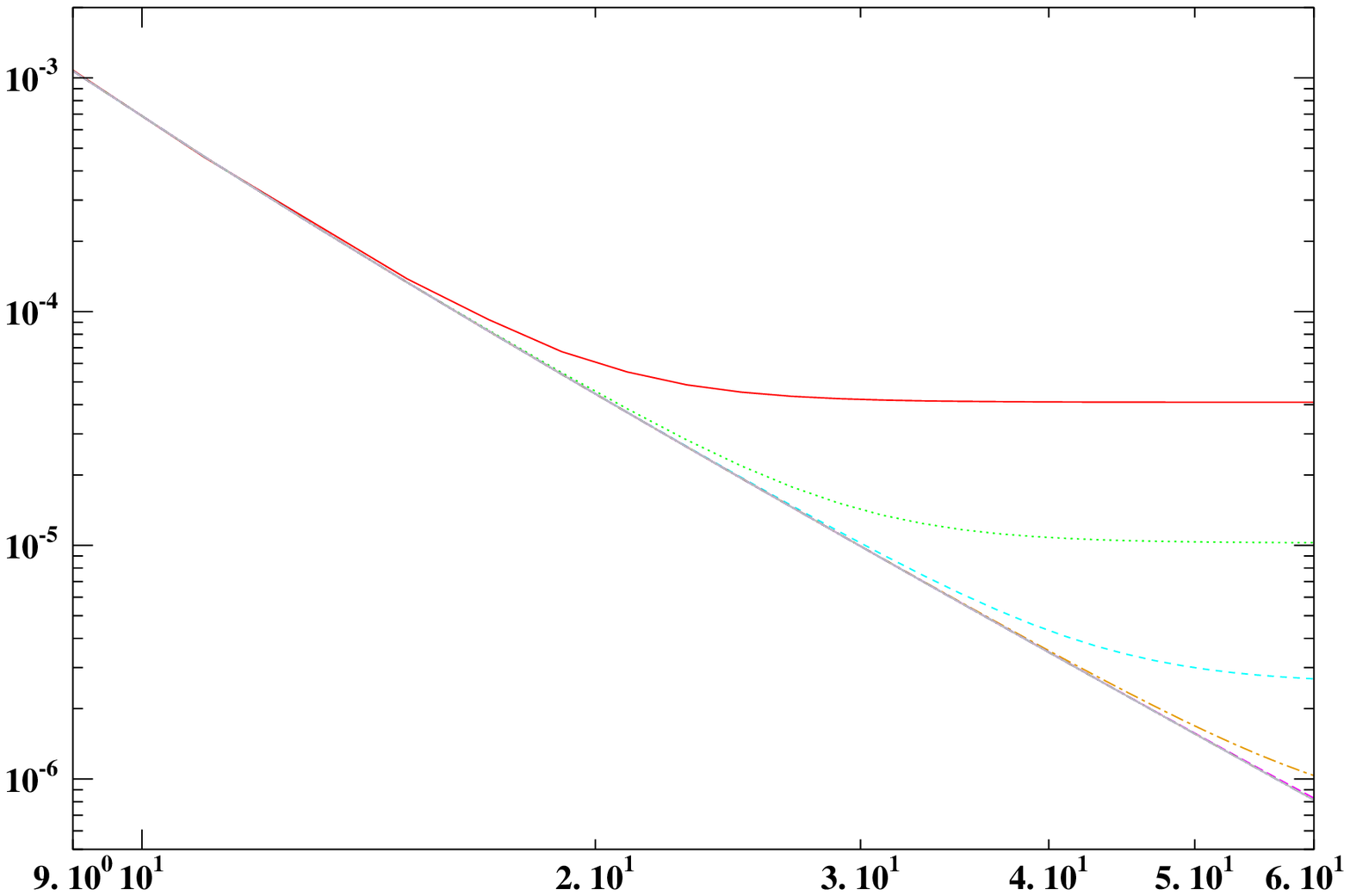,height=4.5truecm} \\
\psfig{figure=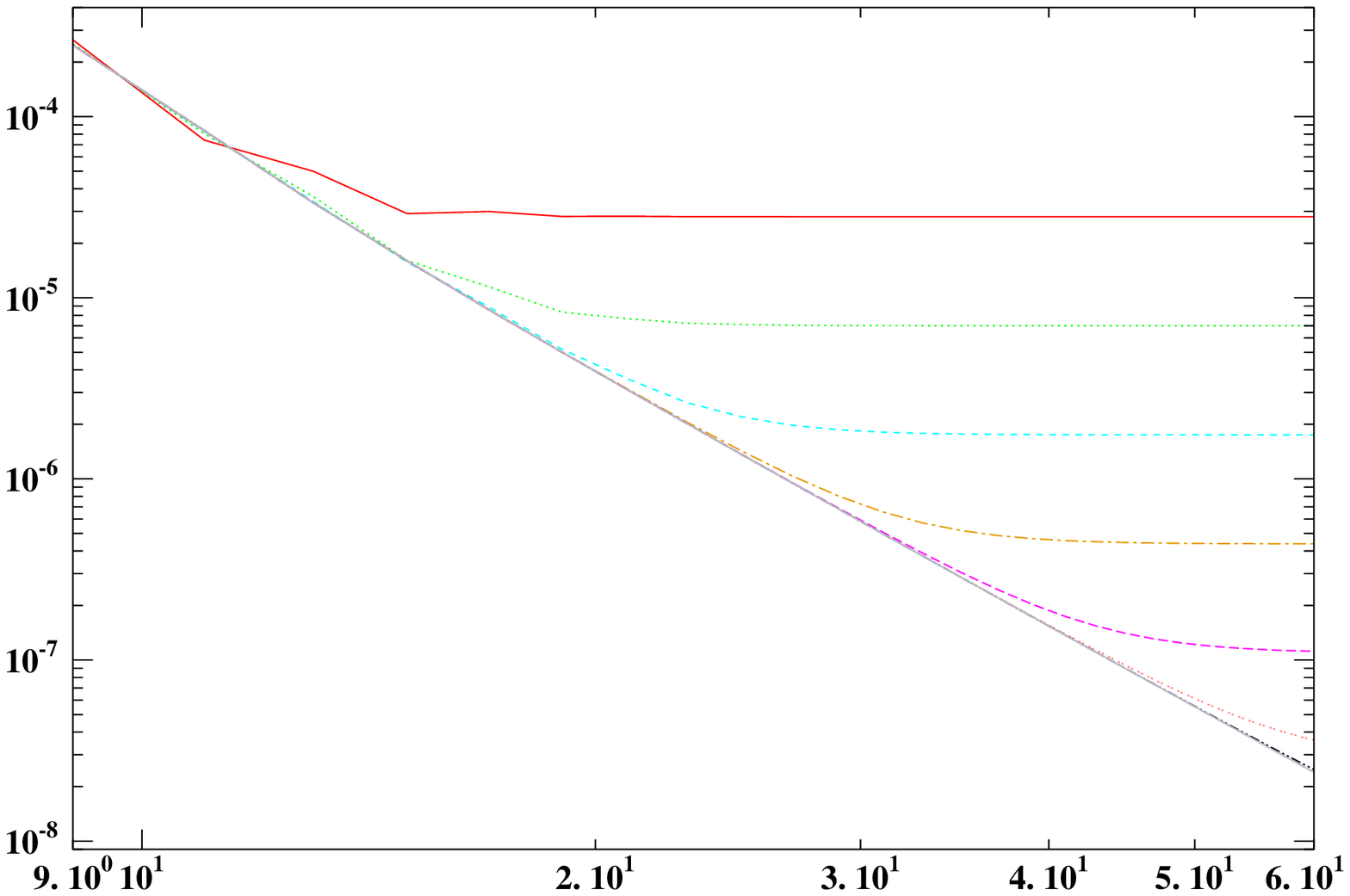,height=4.5truecm} &
\psfig{figure=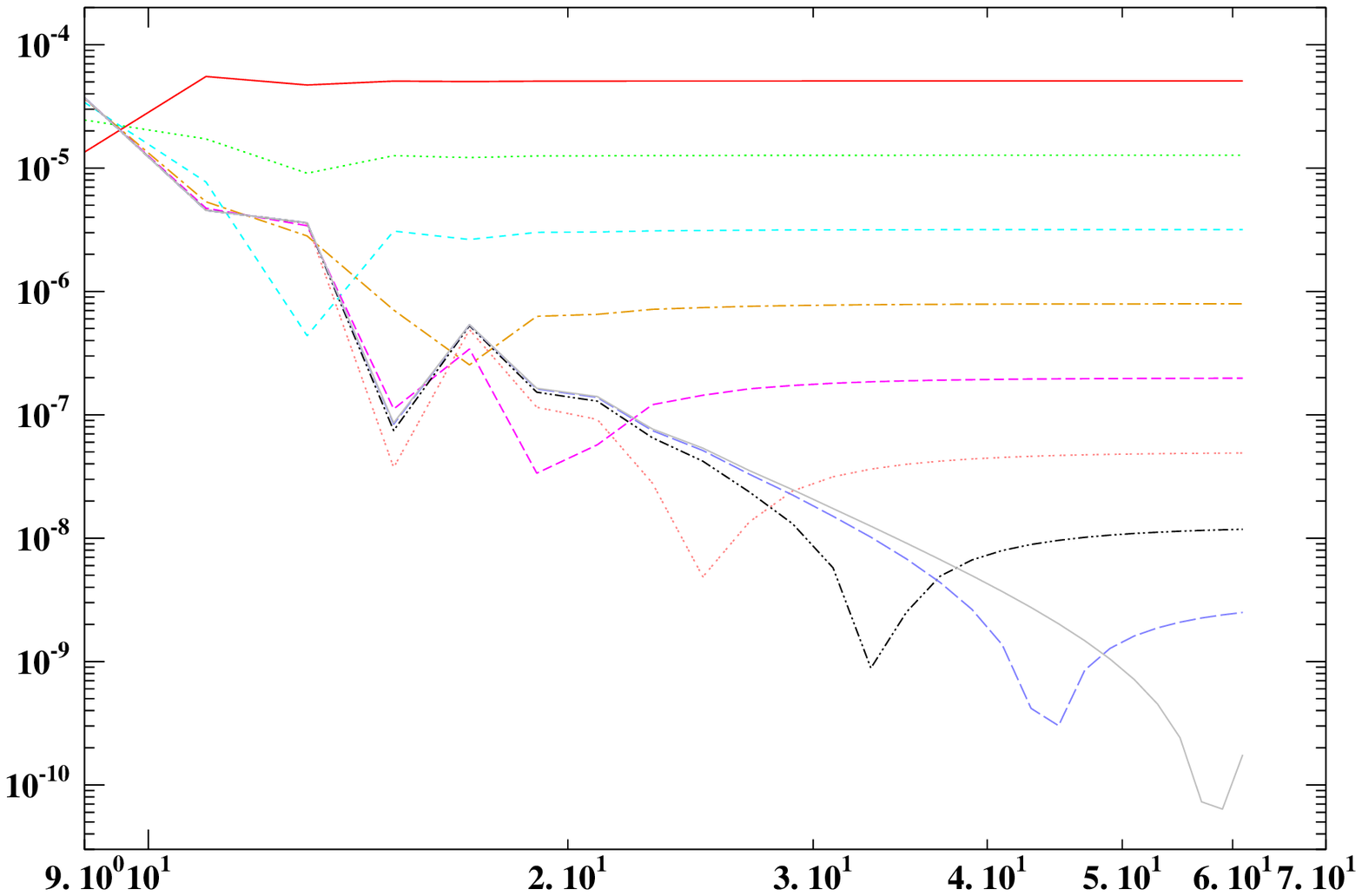,height=4.5truecm} 
\end{tabular}
\caption{Numerical errors $\|u_{N,N_g}-u\|_{H^1}$ (top left),
  $\|u_{N,N_g}-u\|_{L^2}$ (top right), $\|u_{N,N_g}-u\|_{H^{-1}}$
  (bottom left), and $|\lambda_{N,N_g}-\lambda|$ (bottom right), as
  functions of $2N+1$ (the dimension of $\widetilde X_N$), for $N_g=128$ (red),
  $N_g=256$ (green), $N_g=512$ (cyan), $N_g=1024$ (gold), $N_g=2048$
  (magenta), $N_g=4096$ (pink), $N_g=8192$ (black), $N_g=16384$ (blue),
  $N_g=32768$ (light blue).}
\end{figure}

\medskip

The numerical errors $\|u_{N,N_g}-u\|_{H^1}$,
$\|u_{N,N_g}-u\|_{L^2}$, $\|u_{N,N_g}-u\|_{H^{-1}}$, and
$|\lambda_{N,N_g}-\lambda|$, for $N=30$, as functions of $N_g$ (in log
scales) are plotted on Figure~4. When $N_g$ goes to 
infinity, the sequences $\log_{10} \|u_{N,N_g}-u\|_{H^1}$,
$\log_{10} \|u_{N,N_g}-u\|_{L^2}$, $\log_{10} \|u_{N,N_g}-u\|_{H^{-1}}$, and
$\log_{10} |\lambda_{N,N_g}-\lambda|$ converge to $\log_{10}
\|u_{N}-u\|_{H^1}$, $\log_{10} \|u_{N}-u\|_{L^2}$, $\log_{10}
\|u_{N}-u\|_{H^{-1}}$, and $\log_{10} |\lambda_{N}-\lambda|$
respectively. For smaller values of $N_g$, the numerical integration
error dominates and these functions all decay
linearly with $\log_{10}N_g$ with a slope very close to $-2$. For fixed
$N$, the upper bounds (\ref{eq:erroruNNgL2})-(\ref{eq:errorlambdaNNg}) 
also decay linearly with $\log_{10}N_g$, but with a slope equal to
$-1.5$. To obtain sharper upper bounds for the numerical integration
error, we need to replace 
(\ref{eq:estimeIV}) with a sharper estimate of 
$\|\Pi_{2N}(V-{\cal I}_{N_g}(V))\|_{L^2}$, which is possible for the
particular example under consideration here. Indeed, remarking that
under the condition $N_g \ge 4N+1$, 
$$
\|\Pi_{2N}(V-{\cal I}_{N_g}(V))\|_{L^2}
= \left( \sum_{|g| \le 2N} \left| 
\sum_{k \in \Z^\ast} \widehat V_{g+kN_g} \right|^2 \right)^{1/2},
$$
we can, using (\ref{eq:coeff_Fourier}), show that
$$
\|\Pi_{2N}(V-{\cal I}_{N_g}(V))\|_{L^2} \le \frac{C \, N^{1/2}}{N_g^2},
$$
for a constant $C$ independent of $N$ and $N_g$. We deduce that for this
specific example
\begin{eqnarray*} 
\|u_{N,N_g}-u\|_{H^1} & \le & C \left(N^{-5/2}+N^{1/2} N_g^{-2}\right) \\
\|u_{N,N_g}-u\|_{L^2} & \le & C \left(N^{-7/2}+N^{1/2} N_g^{-2}\right)
\\
|\lambda_{N,N_g}-\lambda| & \le & C
\left(N^{-9/2}+N^{1/2} N_g^{-2}\right).  
\end{eqnarray*}

\medskip

\begin{figure}[h] \label{fig:FourierNg2}
\centering
\psfig{figure=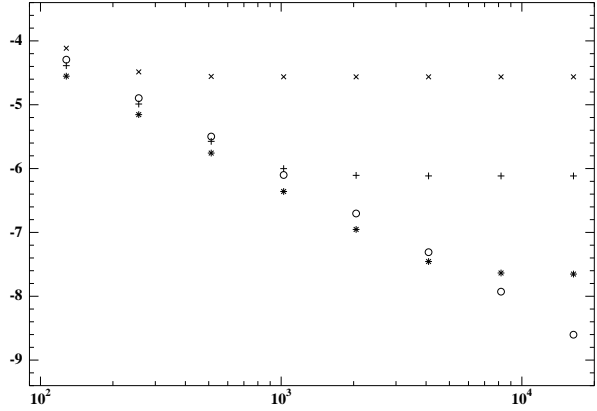,height=6truecm}
\caption{Numerical errors $\|u_{N,N_g}-u\|_{H^1}$ ($\times$),
$\|u_{N,N_g}-u\|_{L^2}$ ($+$), $\|u_{N,N_g}-u\|_{H^{-1}}$ ($\ast$), and
$|\lambda_{N,N_g}-\lambda|$ ($\circ$), for $N=30$, as functions of $N_g$
(in log scales).}
\end{figure}

\section*{Acknowledgements} This work was done while E.C.
was visiting the Division of Applied Mathematics of Brown
University, whose support is gratefully acknowledged. The authors also
thank Jean-Yves Chemin and Didier Smets for fruitful discussions, and
Claude Le Bris for valuable comments on a preliminary version of this work.

\section{Appendix: properties of the ground state}

The mathematical properties of the minimization problems
(\ref{eq:min_pb_u}) and (\ref{eq:min_pb_rho}) which are useful for the
numerical analysis reported in this article are gathered in the
following lemma.  

\medskip

Recall that $d=1$, $2$ or $3$.

\medskip

\begin{lem} \label{lem:theory}
Under assumptions (\ref{eq:Hyp1})-(\ref{eq:Hyp7}), 
(\ref{eq:min_pb_rho}) has a unique minimizer $\rho_0$ and
(\ref{eq:min_pb_u}) has exactly two minimizers $u=\sqrt{\rho_0}$ and
$-u$. The function $u$ is solution to the nonlinear eigenvalue problem 
(\ref{eq:10}) for some $\lambda \in \R$. 
Besides, $u \in C^{0,\alpha}(\overline{\Omega})$ for some $0 <
\alpha < 1$, $u > 0$ in $\Omega$, and $\lambda$ is the lowest eigenvalue
of $A_u$ and is non-degenerate.   
\end{lem}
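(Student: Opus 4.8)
The plan is to follow the classical variational route for nonlinear ground states, in four stages: existence of a minimizer of (\ref{eq:min_pb_u}) by the direct method, uniqueness up to sign through a convexity argument in the density variable (using the reformulation (\ref{eq:min_pb_rho})), the Euler--Lagrange analysis, and finally elliptic regularity together with a Perron--Frobenius type argument. For existence, I would take a minimizing sequence $(v_n)$ for (\ref{eq:min_pb_u}). Combining the ellipticity (\ref{eq:Hyp2}), the bound (\ref{eq:Hyp3}) with $p>\max(1,d/2)$ (which makes $\int_\Omega Vv^2$ controllable by $\epsilon\|\nabla v\|_{L^2}^2+C_\epsilon$ on the constraint set, since then $2p'<2^*$), and the convexity lower bound $F(t)\ge F(0)+F'(0)t$ from (\ref{eq:Hyp6}), one gets coercivity of $E$ for the $H^1$-norm over $\{\int_\Omega v^2=1\}$. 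Hence $(v_n)$ is bounded in $X$; after extraction, $v_n\rightharpoonup v$ in $X$, $v_n\to v$ strongly in $L^2(\Omega)$ and in every $L^q(\Omega)$ with $q<2^*$, and a.e. The constraint passes to the limit, the gradient term is weakly lower semicontinuous, $\int_\Omega Vv^2$ is continuous for the strong $L^{2p'}$ convergence, and $\int_\Omega F(v^2)$ converges because (\ref{eq:Hyp7}) with $q<2$ forces $|F(v_n^2)|\le C(1+|v_n|^{2q+2})$ with $2q+2<2^*$, so $(F(v_n^2))_n$ is bounded in some $L^{1+\epsilon}$, hence uniformly integrable, and Vitali's theorem applies. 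This produces a minimizer.

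For uniqueness and the Euler equation, I would use that $a(|v|,|v|)=a(v,v)$ and $F(v^2)=F(\rho)$ with $\rho=v^2$, so that $\inf E=\inf\mathcal E=I$, and that the admissible set $\{\rho\ge 0,\ \sqrt\rho\in X,\ \int_\Omega\rho=1\}$ is convex while $\mathcal E$ is strictly convex on it: $\rho\mapsto\int_\Omega A\nabla\sqrt\rho\cdot\nabla\sqrt\rho$ is convex (the classical convexity of the kinetic term as a functional of the density, for symmetric positive $A$), $\rho\mapsto\int_\Omega V\rho$ is affine, and $\rho\mapsto\int_\Omega F(\rho)$ is strictly convex by $F''>0$. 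Hence (\ref{eq:min_pb_rho}) has at most one minimizer, and with the first stage exactly one, $\rho_0$; set $u=\sqrt{\rho_0}$. Next, (\ref{eq:Hyp7}) makes $v\mapsto\int_\Omega F(v^2)$ of class $C^1$ on $X$, so $E\in C^1(X)$ and the Lagrange multiplier rule on the $C^1$ sphere $\|v\|_{L^2}=1$ yields $A_uu=\lambda u$ for some $\lambda\in\R$, i.e. (\ref{eq:10}). That $u$ and $-u$ are the \emph{only} minimizers of (\ref{eq:min_pb_u}) will follow from the last stage: any minimizer $v$ satisfies $v^2=\rho_0$, hence $|v|=u$, and once $u$ is continuous and positive, continuity of $v$ (by the same regularity argument) on the connected set $\Omega$ forces $v=\pm u$.

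For the regularity and spectral part, I would rewrite the equation as $-\mbox{div}(A\nabla u)+c\,u=0$ with $c=V+f(u^2)-\lambda$; using (\ref{eq:Hyp7}) with $q<2$ and $u\in L^{2^*}$ one checks $c\in L^r(\Omega)$ for some $r>d/2$, so De Giorgi--Nash--Moser estimates for divergence-form operators with merely bounded measurable coefficients give $u\in L^\infty(\Omega)$, then $c\in L^p$ with $p>d/2$ and $u\in C^{0,\alpha}(\overline\Omega)$ (Hölder up to the boundary with the homogeneous Dirichlet datum, or by periodicity). Since $u=\sqrt{\rho_0}\ge 0$ and $\|u\|_{L^2}=1\ne 0$, the Harnack inequality (valid for $c\in L^r$, $r>d/2$) gives $u>0$ in $\Omega$. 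Finally $A_u$ is self-adjoint, bounded below, with form domain $X$, hence has compact resolvent since $X\hookrightarrow L^2(\Omega)$ compactly; its lowest eigenvalue $\mu_1=\inf_{\|v\|_{L^2}=1}\langle A_uv,v\rangle_{X',X}$ is attained, and since $|w|\in X$ with $\langle A_u|w|,|w|\rangle_{X',X}=\langle A_uw,w\rangle_{X',X}$ a minimizer can be taken $\ge 0$, hence $>0$ by Harnack, while two orthogonal positive eigenfunctions cannot coexist, so $\mu_1$ is simple. As $u>0$ is an eigenfunction of $A_u$ whereas any eigenfunction for an eigenvalue $>\mu_1$ is orthogonal to a positive function and therefore sign-changing, we conclude $\lambda=\mu_1$ and $\lambda$ is non-degenerate.

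I expect the main obstacle to lie in this last stage, specifically in making the Perron--Frobenius argument fully rigorous in the low-regularity framework $A\in L^\infty$, $V\in L^p$: one must invoke the correct divergence-form Harnack inequality (for potentials in $L^r$, $r>d/2$), together with De Giorgi--Nash continuity up to the boundary and the identity $\langle A_u|w|,|w|\rangle_{X',X}=\langle A_uw,w\rangle_{X',X}$, in order to obtain \emph{simultaneously} that $\lambda$ is the lowest eigenvalue of $A_u$ and that it is simple. The existence step and the convexity-uniqueness step are, by contrast, routine once the growth restriction $q<2$ is used with care.
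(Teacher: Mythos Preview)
Your proposal is correct and follows essentially the same route as the paper's proof: direct method for existence, strict convexity of $\mathcal E$ in the density variable for uniqueness, the Euler--Lagrange equation from $E\in C^1(X)$, De Giorgi--Nash--Moser/Harnack for $C^{0,\alpha}$ regularity and strict positivity, and the Perron--Frobenius argument (nonnegative ground state of $A_u$ is strictly positive, hence simple, hence equals $u$ up to sign) for the spectral conclusion. The only cosmetic differences are that the paper first normalizes to $a(v,v)\ge\|v\|_{L^2}^2$ and $F(0)=F'(0)=0$, and obtains weak lower semicontinuity of $E$ from convexity plus strong continuity rather than your Vitali argument for the $F$ term; both work, and your explicit use of connectedness of $\Omega$ to pass from $|v|=u$ to $v=\pm u$ is a point the paper leaves implicit.
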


\medskip

\begin{proof}
As $A$ is uniformly bounded and coercive on $\Omega$ and $V \in
L^q(\Omega)$ for some $q > \max(1,d/2)$, $v \mapsto a(v,v)$ is a
quadratic form on $X$, bounded from
below on the set $\left\{v \in X \; | \; \|v\|_{L^2}=1\right\}$.
Replacing $a(v,v)$ with $a(v,v)+C\|v\|_{L^2}^2$ and $F(t)$ with
$F(t)-F(0)-tF'(0)$ 
does not change the minimizers of (\ref{eq:min_pb_u}) and
(\ref{eq:min_pb_rho}). We can therefore assume, {\em without loss of
generality}, that 
\begin{equation} \label{eq:hyp9}
\forall v \in X, \; a(v,v) \ge \|v\|_{L^2}^2 \quad \mbox{ and } \quad 
F(0)=F'(0)=0. 
\end{equation}
It then follows from (\ref{eq:Hyp7}) and (\ref{eq:hyp9}) that
$0 \le F(v^2) \le C (v^2+v^{6})$. As $X 
\hookrightarrow L^{6}(\Omega)$, $E(v)$ is finite for
all $v \in X$, $I > -\infty$ and the minimizing sequences of
(\ref{eq:min_pb_u}) are bounded in $X$. Let $(v_n)_{n \in \N}$ be a
minimizing sequence of (\ref{eq:min_pb_u}). Using the fact that $X$ is
compactly embedded in $L^2(\Omega)$, we can extract from $(v_n)_{n \in \N}$ a
subsequence $(v_{n_k})_{k \in \N}$ which converges weakly in $X$,
strongly in $L^2(\Omega)$ and almost everywhere in $\Omega$ to some $u \in
X$. As $\|v_{n_k}\|_{L^2}=1$ and $E(v_{n_k}) \downarrow I$, we obtain
$\|u\|_{L^2}=1$ and $E(u) \le I$ ($E$ 
is convex and strongly continuous, hence weakly l.s.c., on $X$). Hence
$u$ is a minimizer of (\ref{eq:min_pb_u}). As $|u| \in X$,
$\||u|\|_{L^2} = 1$ and $E(|u|)=E(u)$, we can assume without loss of
generality that $u \ge 0$. Assumptions (\ref{eq:Hyp1})-(\ref{eq:Hyp7})
imply that $E$ is $C^1$ on $X$ and that $E'(u) = A_u u$. It follows that
$u$ is solution to (\ref{eq:Euler}) for some $\lambda \in \R$. By
elliptic regularity arguments~\cite{GT}, we get $u \in
C^{0,\alpha}(\overline{\Omega})$ for some
$0 < \alpha < 1$. We also have $u >
0$ in $\Omega$; this is a consequence of the Harnack 
inequality~\cite{Stampacchia}. Making the change of variable 
$\rho=v^2$, it is easily seen that if $v$ is a minimizer of
(\ref{eq:min_pb_u}), then $v^2$ is a minimizer of (\ref{eq:min_pb_rho}),
and that, conversely, if $\rho$ is a minimizer of (\ref{eq:min_pb_rho}),
then $\sqrt{\rho}$ and $-\sqrt{\rho}$ are minimizers of
(\ref{eq:min_pb_u}). Besides, the functional $\cal E$ is strictly
convex on the convex set $\left\{ \rho \ge 0 \; | \, \sqrt{\rho} \in X,
  \; \int_\Omega \rho = 1 \right\}$. Therefore $\rho_0=u^2$ is the unique
minimizer of (\ref{eq:min_pb_rho}) and $u$ and $-u$ are the only
minimizers of (\ref{eq:min_pb_u}).

It is easy to see that $A_u$ is bounded below and has a compact
resolvent. It therefore possesses a lowest eigenvalue $\lambda_0$,
which, according to the min-max principle, satisfies
\begin{equation} \label{eq:minmax}
\lambda_0 = \inf \left\{ \int_{\Omega} (A \nabla v) \cdot \nabla v +
  \int_\Omega   (V+f(u^2)) v^2, \; v \in X, \; \int_\Omega v^2=1 \right\}.
\end{equation}
Let $v_0$ be a normalized eigenvector of $A_u$ associated with
$\lambda_0$. Clearly, $v_0$ is a minimizer of (\ref{eq:minmax}) and
so is $|v_0|$. Therefore, $|v_0|$ is solution to the Euler equation
$A_u|v_0|=\lambda_0|v_0|$. Using again elliptic regularity arguments and
the Harnack inequality, we obtain that $|v_0| \in
C^{0,\alpha}(\overline{\Omega})$ for some
$0 < \alpha < 1$ and that $|v_0| > 0$ on $\Omega$. This implies that either
$v_0 = |v_0| > 0$ in $\Omega$ or $v_0=-|v_0| < 0$ in $\Omega$. In
particular $(u,v_0)_{L^2} \neq 0$. Consequently, $\lambda=\lambda_0$
and $\lambda$ is a simple eigenvalue of $A_u$. 
\end{proof}

\medskip

Let us finally prove that
$\lambda$ is also the ground state eigenvalue of the {\em nonlinear}
eigenvalue problem  
\begin{equation} \label{eq:nonlinear_eigenvalue_pb}
\left\{ \begin{array}{l}
\mbox{search } (\mu,v) \in \R \times X \mbox{ such that} \\
A_v v = \mu v \\
\| v \|_{L^2} = 1, \end{array} \right. 
\end{equation}
in the following sense: if $(\mu,v)$ is solution to
(\ref{eq:nonlinear_eigenvalue_pb}) then either $\mu > \lambda$ or
$\mu=\lambda$ and $v= \pm u$. 

To see this, let us consider a solution $(\mu,v) \in \R \times X$ to
(\ref{eq:nonlinear_eigenvalue_pb}) and denote by $\widetilde w = |v|-u$. 
As for $u$, we infer from elliptic regularity arguments~\cite{GT} that $v \in
C^{0,\alpha}(\overline{\Omega})$.
We have $\|v\|_{L^2} = \|u\|_{L^2} = 1$. Therefore, if $w \le 0$ in
$\Omega$, then 
$|v|=u$, which yields $v=\pm u$ and $\mu=\lambda$. Otherwise, there exists
$x_0 \in \Omega$ such that $\widetilde w(x_0) > 0$, and, up to replacing
$v$ with $-v$, we can consider that the function $w=v-u$ is such that
$w(x_0) > 0$. The function $w$ is in $X \cap
C^{0,\alpha}(\overline{\Omega})$ and satisfies
\begin{equation} \label{eq:on_w}
(A_u-\lambda)w + \frac{f(v^2)-f(u^2)}{v^2-u^2} v (u+v) w = (\mu-\lambda) v.
\end{equation} 
Let $\omega = \left\{ x \in \Omega \; | \; w(x) > 0
\right\} = \left\{ x \in \Omega \; | \; v(x) > u(x) \right\}$ and $w_+ =
\max(w,0)$. As $w_+ \in X$, we deduce from (\ref{eq:on_w}) that
$$
\langle (A_u-\lambda)w_+,w_+\rangle_{X',X} + \int_\omega 
\frac{f(v^2)-f(u^2)}{v^2-u^2} v (u+v) w^2 = (\mu-\lambda) \int_\omega vw.
$$
The left hand side of the above equality is positive and $\int_\omega vw
> 0$. Therefore, $\mu > \lambda$.

\bibliographystyle{plain}
\bibliography{article}

\end{document}